\numberwithin{equation}{section}
\theoremstyle{plain}
\newtheorem{thm}{\protect\theoremname}
  \theoremstyle{plain}
  \newtheorem{assumption}[thm]{\protect\assumptionname}
  \theoremstyle{definition}
  \newtheorem{example}[thm]{\protect\examplename}
  \theoremstyle{plain}
  \newtheorem{prop}[thm]{\protect\propositionname}
  \theoremstyle{remark}
  \newtheorem{rem}[thm]{\protect\remarkname}
  \theoremstyle{plain}
  \newtheorem{lem}[thm]{\protect\lemmaname}
\renewcommand{\hat}{\widehat}
\renewcommand{\tilde}{\widetilde}
  \providecommand{\assumptionname}{Assumption}
  \providecommand{\examplename}{Examples}
  \providecommand{\lemmaname}{Lemma}
  \providecommand{\propositionname}{Proposition}
  \providecommand{\remarkname}{Remark}
\providecommand{\theoremname}{Theorem}
\begin{document}
\global\long\def\phi{\varphi}
\global\long\def\epsilon{\varepsilon}
\global\long\def\theta{\vartheta}
\global\long\def\E{\mathbb{E}}
\global\long\def\Var{\operatorname{Var}}
\global\long\def\N{\mathbb{N}}
\global\long\def\Z{\mathbb{Z}}
\global\long\def\R{\mathbb{R}}
\global\long\def\F{\mathcal{F}}
\global\long\def\le{\leqslant}
\global\long\def\ge{\geqslant}
\global\long\def\MT{\ensuremath{\clubsuit}}
\global\long\def\1{\mathbbm1}
\global\long\def\d{\mathrm{d}}
\global\long\def\P{\mathbb{P}}
\global\long\def\Id{\operatorname{Id}}
\global\long\def\Span{\operatorname{span}}
 \global\long\def\subset{\subseteq}
\global\long\def\supset{\supseteq}
\global\long\def\argmin{\arg\,\min}
\global\long\def\bull{{\scriptstyle \bullet}}
\global\long\def\supp{\operatorname{supp}}
\global\long\def\sgn{\operatorname{sign}}

\title{Bayesian inverse problems with unknown operators}

\author{Mathias Trabs}

\date{Universität Hamburg}
\maketitle
\begin{abstract}
We consider the Bayesian approach to linear inverse problems when
the underlying operator depends on an unknown parameter. Allowing
for finite dimensional as well as infinite dimensional parameters,
the theory covers several models with different levels of uncertainty
in the operator. Using product priors, we prove contraction rates
for the posterior distribution which coincide with the optimal convergence
rates up to logarithmic factors. In order to adapt to the unknown
smoothness, an empirical Bayes procedure is constructed based on Lepski's
method. The procedure is illustrated in numerical examples.
\end{abstract}
\noindent \textbf{MSC 2010 Classification:} Primary: 62G05; Secondary:
62G08, 62G20.

\noindent \textbf{Keywords and phrases:} Rate of contraction, posterior
distribution, product priors, ill-posed linear inverse problems, empirical
Bayes, non-parametric estimation.

\section{Introduction}

Bayesian procedures to solve inverse problems became increasingly
popular in the last years, cf. \citet{stuart2010}. In the inverse
problem literature the underlying operator of the forward problem
is typically assumed to be known. In practice, there might however
be some uncertainty in the operator 
%Typical examples are unknown parameters in a PDE whose initial value shall be recovered or unknown kernels in deconvolution problems. 
which has to be taken into account by the procedure. While there are some frequentist approaches in the statistical
literature to solve inverse problems with an unknown operator, the
Bayesian point of view has not yet been analysed. The aim of this
work is to fill this gap.

Let $f\in L^{2}(\mathcal{D})$ be a function on a domain $\mathcal{D}\subset\R^{d}$
and $K_{\theta}\colon L^{2}(\mathcal{D})\to L^{2}(\mathcal{Q})$,
$\mathcal{Q}\subset\R^{q}$, be an injective, continuous linear operator
depending on some parameter $\theta\in\Theta$. We consider the linear
inverse problem
\begin{equation}
Y=K_{\theta}f+\epsilon Z,\label{eq:obsY}
\end{equation}
where $Z$ Gaussian white noise in $L^{2}(\mathcal{Q})$ and $\epsilon>0$
is the noise level which converges to zero asymptotically. If the
operator $K_{\theta}$ is known, the inverse problem to recover $f$
non-parametrically, i.e. as element of the infinite dimensional space
$L^{2}(\mathcal{D})$, from the observation $Y$ is well studied,
see for instance \citet{cavalier2008}. The Bayesian approach has
been analysed by \citet{knapikEtAl2011} with Gaussian priors, by
\citet{ray2013} non-conjugate priors and many subsequent articles
including \citep{agapiouEtAl2013,Agapiou2014,knapikEtAl2016,knapikEtAl2013}.
Also non-linear inverse problems have been successfully solved via
Bayesian methods, for example, \citep{bochkina2013,dashtiEtAl2013,nickl2017,nicklSoehl2017Bernstein,nicklSoehl2017,vollmer2013}.

Focussing on linear inverse problems, we will extend the Bayesian
methodology to unknown operators. To this end, the unknown parameter
$\theta\in\Theta$ is introduced in (\ref{eq:obsY}) where $K_{\theta}$
may depend non-linearly on $\theta$. Unknown operators are relevant in numerous applications. Examples include semi-blind and blind deconvolution for image analysis. Therein, the operator is given by $K_\theta f=\int g_\theta(\cdot-y)f(y)dy$ with some unknown convolution kernel $g_\theta$ \cite{BurgerScherzer2001,JustenRamlau2006,StueckEtAl2012}. More general integral operators such as singular layer potential operators appear in the context of partial differential equations, cf. examples in \cite{CohenEtAl2004,gugushviliEtAl2018}. If the coefficients of the underlying PDE are unknown, then the operator itself is only partially known. A typical example of this type is the backwards heat equation where the solution $u$ of the PDE $\frac\partial{\partial t}u=\theta\Delta u$ (with Dirichlet boundary conditions) is observed at some time $t$ and the aim is to estimate the initial value function $f=u(0,\cdot)$. Here, we take into account an unknown diffusivity parameter $\theta>0$. The solution $u(t,\cdot)$ depends linearly on $f$ and the resulting operator admits a singular value decomposition (SVD) with respect to the sine basis and with $\theta$ dependent singular values $\rho_{\theta,k}=e^{-\theta \pi^2k^2t},k\ge1$, cf. Section~\ref{sec:Applications}. In particular, the resulting inverse problem is severely ill-posed.

Even without measurement errors the target function $f$ is in general not identifiable any more for unknown operators, i.e., there may be several solutions $(\theta,f)$ to the equation $Y=K_\theta f$. For instance, if $K_\theta$ admits a SVD $K_\theta\phi_k=\rho_{k}\psi_k$ for a orthonormal systems $(\phi_k)_{k\ge1},(\psi_k)_{k\ge1}$ and unknown singular values $\theta=(\rho_k)_{k\ge1}$, then we have $K_{\theta}f=K_{\theta/a}(af)$ for any function $f\in L^2(\mathcal D)$ and any scalar $a>0$. We thus require some extra information.

There are different approaches in the inverse problem literature to deal with this identifiability problem, particularly in the context of semi-blind or blind deconvolution. One approach is to find the so called minimum norm solution which has a minimal distance to some a priori estimates for $\theta$ and $f$, cf. \cite{BurgerScherzer2001,JustenRamlau2006}. Another idea is to assume that some approximation of the unknown operator is available for the reconstruction of $f$, cf. \cite{JohannesEtAl2011, StueckEtAl2012}. Similarly, we may assume to have some noisy observation of the unknown parameter $\theta$ which then allows to construct an estimator for $K_\theta$. 

In this paper we will study this last setting. More precisely, we suppose that the parameter set $\Theta$ is (a subset of) some Hilbert space and we consider the additional sample
\begin{equation}
T=\theta+\delta W\label{eq:obsT}
\end{equation}
where $W$ is white noise on $\Theta$, independent of $Z$, and $\delta>0$
is some noise level. Thereby, $\theta$ is considered as a nuisance
parameter and we will not impose any regularity assumptions on $\theta$.
Our aim is the estimation of $f$ from the observations (\ref{eq:obsY})
and (\ref{eq:obsT}). 
This setting includes several exemplary models with different levels
of uncertainty in the operator $K_{\theta}$:
\begin{enumerate}
\item[A] If $\Theta\subset\R^{p}$, we have a parametric characterization
of the operator $K_{\theta}$ and $T$ can be understood as an independent
estimator for $\theta$. 
\item[B] \citet{cavalierHengartner2005} have studied the case where the eigenfunctions
of $K_{\theta}$ are known, but only noisy observations of the singular
values $(\rho_{k})_{k\ge1}$ are observed: $T_{k}=\rho_{k}+\delta W_{k},k\ge1$,
with i.i.d. standard normal $(W_{k})_{k}$. In this case $\Theta=\ell^{2}$,
supposing $K_{\theta}$ is Hilbert-Schmidt, and $\theta=(\rho_{k})_{k}$
is the sequences of singular values of $K_{\theta}$. 
\item[C] \citet{efromovichKoltchinskii2001}, \citet{HoffmannReiss2008} as well as \citet{marteau2006}
have assumed the operator as completely unknown and considered additional
observations of the form
\[
L=K+\delta W
\]
where the operator $L$ is blurred by some independent white noise
$W$ on the space of linear operators from $L^{2}(\mathcal{D})$ to
$L^{2}(\mathcal{Q})$ with some noise level $\delta$. Fixing basis
$(e_{k})$ and $(h_{k})$ of $L^{2}(\mathcal{D})$ and $L^{2}(\mathcal{Q})$,
respectively, $K$ is characterised by the infinite matrix $\theta=(\langle Ke_{k},h_{l}\rangle)_{k,l\ge1}\in\R^{\N^{2}}$
and $W$ can be identified with the random matrix $(\langle We_{k},g_{l}\rangle)_{k,l\ge1}$
consisting of i.i.d. standard Gaussian entries.
\end{enumerate}
In contrast to the just mentioned articles \citep{cavalierHengartner2005,efromovichKoltchinskii2001,HoffmannReiss2008,marteau2006}, we will investigate the Bayesian approach. We thus put a prior distribution $\Pi$ on $(f,\theta)\in L^{2}(\mathcal{D})\times\Theta$. Denoting the probability density of $(Y,T)$ under the parameters $(f,\theta)$ with respect to some reference measure by $p_{f,\theta}$, the posterior distribution given the observations $(Y,T)$ is given by Bayes' theorem:
\begin{equation}\label{eq:posteriorIntro}
  \Pi(B|Y,T)=\frac{\int_Bp_{f,\theta}(Y,T)\d\Pi(f,\theta)}{\int_{L^2\times\Theta}p_{f,\theta}(Y,T)\d\Pi(f,\theta)}
\end{equation}
for measurable subsets $B\subset L^2(\mathcal{D})\times\Theta$. Due to the white noise model, the density $p_{f,\theta}$ inherits the nice structure from the normal distribution, cf. Section~\ref{sec:posterior}. Although we cannot hope for nice conjugate pairs of prior and posterior distribution due to the non-linear structure of $(f,\theta)\mapsto K_\theta f$, there are efficient Markov chain Monte Carlo algorithms to draw from $\Pi(\cdot|Y,T)$, cf. \citet{tierney1994}.

To analyse the behaviour of the posterior distribution, we will take a frequentist point of view and assume the observations
are generated under some true, but unknown $f_{0}\in L^{2}(\mathcal{D})$
and $\theta_{0}\in\Theta$. In a first step we will identify general conditions
on a prior $\Pi$ under which
the posterior $\Pi(f\in\cdot|Y,T)$ for $f$ concentrates
in a neighbourhood of $f_{0}$ with a certain rate of contraction $\xi_{\epsilon,\delta}$: We show for some constant $D>0$ the convergence
\begin{equation}
\Pi\big(f\in L^2(\mathcal D):\|f-f_{0}\|>D\xi_{\epsilon,\delta}|Y,T\big)\to0\label{eq:rateIntro}
\end{equation}
in $\P_{f_0,\theta_0}$-probability as $\epsilon$ and $\delta$ go to zero. This contraction result verifies that whole probability mass the posterior distribution is asymptotically located in a small ball around $f_0$ with radius of order $\xi_{\epsilon,\delta}\downarrow0$. Hence, draws from the posterior distribution will be close to the unknown function $f_0$ with high probability. This especially implies that the posterior mean and the posterior median are consistent estimators of the unknown function $f_0$. Interestingly, the difficulty to recover $f$ from $(Y,T)$ is same in all three above mentioned models. 

The proof of the contraction result follows general principles developed by \citet{ghosalEtAl2000}. The analysis of the posterior distribution requires to control both, the numerator in \eqref{eq:posteriorIntro} and the normalising constant. To find a lower bound for the latter, a so-called small ball probability condition is imposed ensuring that the prior puts some minimal weight in a neighbourhood of the truth. Given this bound, the contraction theorem can be shown by constructing sufficiently powerful tests for the hypothesis $H_{0}:f=f_{0}$ against the alternative $H_{1}:\|f-f_{0}\|>D\xi_{\epsilon,\delta}$ for the constant $D>0$ from \eqref{eq:rateIntro}. To find the test, we follow \citet{gineNickl2011}
and use a plug-in test based on a frequentist estimator. This estimator obtained by the Galerkin projection method, as proposed in \citep{efromovichKoltchinskii2001,HoffmannReiss2008} for the Model C.

The main difficulty is that without structural assumptions on $\Theta$,
e.g. if $\Theta=\ell_{2}$, an infinite dimensional nuisance parameter
$\theta$ cannot be consistently estimated. We thus cannot expect
a concentration of $\Pi(\theta\in\cdot|Y,T)$. Why should then $\Pi(K_{\theta}f\in\cdot|Y,T)$
concentrate around the truth? Fortunately, $K_{\theta_{0}}f_{0}$
is regular, such that a finite dimensional projection suffices to
reconstruct $f_{0}$ with high accuracy. Under the reasonable assumption
that the projection of $K_{\theta_{0}}$ depends only on a finite
dimensional projection $P\theta_{0}$ of $\theta_{0}$, we can indeed
estimate $f_{0}$ without estimating the full $\theta_{0}$. Similarly,
we show in the Bayesian setting that a concentration of this finite
dimensional projection $P\theta$ is sufficient resulting in a small
ball probability condition depending only on $f$ and $P\theta$.

The conditions of the general result are verified in the mildly ill-posed
case and in the severely ill-posed case, assuming some Sobolev regularity
of $f_{0}$. We use a truncated product prior of $f$
and a product prior on $\theta$. Choosing the truncation level $J$
of prior in an optimal way, the resulting contraction rates coincide
with the minimax optimal rates which are known in several models up
to logarithmic factors. These rates are indeed the same as for the
known parameter case, cf. \citet{ray2013}, if $\delta=\mathcal{O}(\epsilon)$. 

Since the optimal level $J$ depends on the unknown regularity $s$
of $f_{0}$, a data-driven procedure to select $J$ is desirable.
There are basically two ways to tackle this problem. Setting a hyper
prior on $s$, a hierarchical Bayes procedure could be considered.
Alternatively, although not purely Bayesian, we can try to select
some $\hat{J}$ empirically from the observations $Y,T$ and then
use this $\hat{J}$ in the Bayes procedure. Both possibilities are
only rarely studied for inverse problems. Using known operators, the
few articles on this topic include \citet{knapikEtAl2016} considering
both constructions with a Gaussian prior on $f$ and \citet{ray2013}
who has considered a sieve prior which could be interpreted as hierarchical
Bayes procedure. We will follow the second path and choose $J$ empirically
using Lepski's method \citep{lepski1990} which yields an easy to
implement procedure (note that \citep{knapikEtAl2016} used a maximum
likelihood approach to estimate $s$). We prove that the final adaptive
procedure attains the same rate as the optimized non-adaptive method.

This paper is organised as follows: The posterior distribution is
derived in Section~\ref{sec:posterior}. The general contraction
theorem is presented in Section~\ref{sec:ContractionRates}. In Section~\ref{sec:productPrior}
specific rates for Sobolev regular functions $f$ in the mildly and
the severely case are determined using a truncated product prior.
An adaptive choice of the truncation level is constructed in Section~\ref{sec:Adaptation}.
In Section~\ref{sec:Applications} we discuss the implementation
of the Bayes method using a Markov chain Monte Carlo algorithm and
illustrate the method in two numerical examples. All proofs are postponed
to Section~\ref{sec:Proofs}.

\section{Setting and posterior distribution\label{sec:posterior}}

Let us fix some notation: $\langle\cdot,\cdot\rangle$ and $\|\cdot\|$
denote the scalar product and the norm of $L^{2}(\mathcal Q)$ or $\Theta$. We
write $x\lesssim y$ if there is some universal constant $C>0$ such
that $x\le Cy$. If $x\lesssim y$ and $y\lesssim x$ we write $x\simeq y$. 
We recall that noise process $Z$ in (\ref{eq:obsY}) is the standard
iso-normal process, i.e., $\langle g,Z\rangle$ is $\mathcal{N}(0,\|g\|^{2})$-distributed
for any $g\in L^{2}(\mathcal Q)$ and covariances are given by
\[
\E[\langle Z,g_{1}\rangle\langle Z,g_{2}\rangle]=\langle g_{1},g_{2}\rangle\qquad\text{for all }g_{1},g_{2}\in L^{2}(\mathcal Q).
\]
We write $Z\sim\mathcal{N}(0,\Id)$. Note that $Z$ cannot
be realised as an element of $L^{2}(\mathcal Q)$, but only as an Gaussian process
$g\mapsto\langle g,Z\rangle$. 

The observation scheme (\ref{eq:obsY}) is equivalent to observing 
\[
\langle Y,g\rangle=\langle K_{\theta}f,g\rangle+\epsilon\langle Z,g\rangle\qquad\text{for all }g\in L^{2}(\mathcal Q).
\]
Choosing an orthonormal basis $(\phi_{k})_{k\ge1}$ of $L^2(\mathcal Q)$ with respect to the standard $L^2$-scalar product, we obtain the
series representation

\[
Y_{k}:=\langle Y,\phi_{k}\rangle=\langle K_{\theta}f,\phi_{k}\rangle+\epsilon Z_{k}
\]
for i.i.d. random variables $Z_{k}\sim\mathcal{N}(0,1),k\ge1$. Note
that the distribution of $(Z_{k})$ does not depend on $\theta$.
If $K_{\theta}$ is compact, it might be tempting to choose $(\phi_{k})$
from the singular value decomposition of $K_{\theta}$ simplifying
$\langle K_{\theta}f,\phi_{k}\rangle$. However, such a basis of eigenfunctions
will in general depend on the unknown $\theta$ and thus cannot be
used. Since $(Z_{k})_{k\ge1}$ are i.i.d., the distribution of the
vector $(Y_{k})_{k\ge1}$ is given by 
\[
\P_{\theta,f}^{Y}=\bigotimes_{k\ge1}\mathcal{N}\big(\langle K_{\theta}f,\phi_{k}\rangle,\epsilon^{2}\big).
\]
By Kakutani's theorem, cf. \citet[Theorem 2.7]{daPrato2006}, $\P_{\theta,f}$
is equivalent to the law $\P_{0}^{Y}=\bigotimes_{k\ge1}\mathcal{N}(0,\epsilon^{2})$
of the white noise $\epsilon Z$. Writing $\langle K_{\theta}f,Z\rangle:=\sum_{k\ge1}\langle K_{\theta}f,\phi_{k}\rangle Z_{k}$
with some abuse of notation, since $Z$ is not in $L^{2}(\mathcal Q)$, we obtain
the density
\[
\frac{\d\P_{\theta,f}^{Y}}{\d\P_{0}^{Y}}=\exp\Big(\frac{1}{\epsilon}\langle K_{\theta}f,Z\rangle-\frac{1}{2\epsilon^{2}}\sum_{k\ge1}\langle K_{\theta}f,\phi_{k}\rangle^{2}\Big)=\exp\Big(\frac{1}{\epsilon^{2}}\langle K_{\theta}f,Y\rangle-\frac{1}{2\epsilon^{2}}\|K_{\theta}f\|^{2}\Big),
\]
where we have used $Y_{k}=\epsilon Z_{k}$ under $\P_{0}^{Y}$ for
the second equality. 

Since any continuous operator $K_{\theta}$ can be described
by the infinite matrix $(\langle K_{\theta}\phi_{j},\phi_{k}\rangle)_{j,k\ge1}$,
we may assume with loss of generality that $\Theta\subset\ell^{2}$. The distribution of $T$ is then similarly given
by $\P_{\theta}^{T}=\bigotimes_{k\ge1}\mathcal{N}(\theta_{k},\delta^{2})$
being equivalent to $\P_{0}^{T}=\bigotimes_{k\ge1}\mathcal{N}(0,\delta^{2})$.
Writing $T=(T_{k})_{k\ge1}$ and $\langle\theta,T\rangle=\sum_{k\ge1}\theta_{k}T_{k}$,
we obtain the density 
\[
\frac{\d\P_{\theta}^{T}}{\d\P_{0}^{T}}=\exp\Big(\frac{1}{\delta^{2}}\langle\theta,T\rangle-\frac{1}{2\delta^{2}}\|\theta\|^{2}\Big).
\]
Therefore, the likelihood of the observations $(Y,T)$ with respect
to $\P_{0}^{Y}\otimes\P_{0}^{T}$ is given by
\begin{equation}
\frac{\d\P_{\theta,f}^{Y}\otimes\P_{\theta,f}^{T}}{\d\P_{0}^{Y}\otimes\P_{0}^{T}}=\exp\Big(\frac{1}{\epsilon^{2}}\langle K_{\theta}f,Y\rangle-\frac{1}{2\epsilon^{2}}\|K_{\theta}f\|^{2}+\frac{1}{\delta^{2}}\langle\theta,T\rangle-\frac{1}{2\delta^{2}}\|\theta\|^{2}\Big).\label{eq:likelihoodRatio}
\end{equation}
Applying a prior $\Pi$ on the parameter $(f,\theta)\in L^{2}(\mathcal D)\times\Theta$,
we obtain the posterior distribution
\begin{equation}
\Pi(B|Y,T)=\frac{\int_{B}e^{\epsilon^{-2}\langle K_{\theta}f,Y\rangle-(2\epsilon^{2})^{-1}\|K_{\theta}f\|^{2}+\delta^{-2}\langle\theta,T\rangle-(2\delta^{2})^{-1}\|\theta\|^{2}}\d\Pi(f,\theta)}{\int_{L^{2}\times\Theta}e^{\epsilon^{-2}\langle K_{\theta}f,Y\rangle-(2\epsilon^{2})^{-1}\|K_{\theta}f\|^{2}+\delta^{-2}\langle\theta,T\rangle-(2\delta^{2})^{-1}\|\theta\|^{2}}\d\Pi(f,\theta)},\qquad B\in\mathcal{B},\label{eq:posteriorY}
\end{equation}
with the Borel-$\sigma$-algebra $\mathcal{B}$ on $L^{2}(\mathcal Q)\times\Theta$.
Under the frequentist assumption that $Y$ and $T$ are generated
under some $f_{0}$ and $\theta_{0}$, we obtain the
representation
\begin{equation}
\Pi(B|Y,T)=\frac{\int_{B}p_{f,\theta}(Z,W)\d\Pi(f,\theta)}{\int_{L^{2}\times\Theta}p_{f,\theta}(Z,W)\d\Pi(f,\theta)},\qquad B\in\mathcal{B},\label{eq:posterior}
\end{equation}
for
\[
p_{f,\theta}(z,w):=\exp\Big(\frac{1}{\epsilon}\langle K_{\theta}f-K_{\theta_{0}}f_{0},z\rangle-\frac{1}{2\epsilon^{2}}\|K_{\theta}f-K_{\theta_{0}}f_{0}\|^{2}+\frac{1}{\delta}\langle\theta-\theta_{0},w\rangle-\frac{1}{2\delta^{2}}\|\theta-\theta_{0}\|^{2}\Big)
\]
corresponding to the density of $\P_{\theta,f}^{Y}\otimes\P_{\theta}^{T}$ with respect to $\P_{\theta_0,f_0}^{Y}\otimes\P_{\theta_0}^{T}$. 

Note that even if a Gaussian prior is chosen, the posterior distribution
is in general not Gaussian, since $\theta\mapsto K_{\theta}$ might
be non-linear. Hence, the posterior distribution cannot be explicitly
calculated in most cases, but has to be approximated by an MCMC algorithm,
see for instance \citet{tierney1994} and Section~\ref{sec:Applications}. 

\section{Contraction rates\label{sec:ContractionRates}}

For simplicity we throughout suppose $\mathcal{D}=\mathcal{Q}$ such that $L^{2}:=L^{2}(\mathcal{D})=L^{2}(\mathcal{Q})$
and assume $K_{\theta}$ to be self-adjoint. The general case is discussed in Remark~\ref{rem:notAdjoint}.

Taking a frequentist point of view, we assume that the observations
(\ref{eq:obsY}) and (\ref{eq:obsT}) are generated by some fixed
unknown $f_{0}\in L^{2}$ and $\theta_{0}\in\Theta$. As a first main
result the following theorem gives general conditions on the prior
which ensure a contraction rate for the posterior distribution from
(\ref{eq:posterior}) around the true $f_{0}$. 

Let $(\phi_{(j,l)}:j\in\mathcal{\mathcal{I}},l\in\mathcal{Z}_{j})$
be an orthonormal basis of $L^{2}$. We use here the double-index
notation which is especially common for wavelet bases, but also the
single-indexed notation is included if $\mathcal{Z}_{j}$ contains
only one element. For any index $k=(j,l)$ we write $|k|:=j$. Let
moreover $V_{j}=\Span\{\phi_{k}:|k|\le j\}$ be a sequence of approximation
spaces with dimensions $d_{j}\in\N$ associated to $(\phi_{k})$.
We impose the following compatibility assumption on $(\phi_{k})$:
\begin{assumption}
\label{ass:bases} There is some $m\in\N$ such that $\langle K_{\theta}\phi_{l},\phi_{k}\rangle=0$
for any $\theta\in\Theta$ if $\big||l|-|k|\big|>m$.
\end{assumption}

If $K_{\theta}$ is compact and admits an orthonormal basis of eigenfunction
$(e_{k})_{k\ge1}$ being independent of $\theta$, then this is assumption
is trivially satisfied for $(\phi_{k})=(e_{k})$ and $m=0$. On the
other hand this assumption allows for more flexibility for the considered
approximation spaces and can be compared to Condition 1 by \citet{ray2013}.
As a typical example, the possibly $\theta$ depended eigenfunctions
$(e_{\theta,k})$ of $K_{\theta}$ may be the trigonometric basis
of $L^{2}$ while $V_{j}$ are generated by band-limited wavelets.

Having $(\phi_{k})$ and thus $V_{j}$ fixed, we write $\|A\|_{V_{j}\to V_{j}}:=\sup_{v\in V_{j},\|v\|=1}\|Av\|$
for the operator norm for any bounded linear operator $A\colon V_{j}\to V_{j}$
where $V_{j}$ is equipped with the $L^{2}$-norm. We denote by $P_{j}$
the orthognal projection of $L^{2}$ onto $V_{j}$ and define the
operator
\[
K_{\theta,j}:=P_{j}K_{\theta}|_{V_{j}}
\]
as restriction of $K_{\theta}$ to an operator from $V_{j}$ to $V_{j}$.
Note that $K_{\theta,j}$ is given by the finite dimensional matrix
$(\langle K\phi_{k},\phi_{l}\rangle)_{|k|\le j,|l|\le j}\in\R^{d_{j}\times d_{j}}$.
\begin{assumption}
\label{ass:operator}Let $K_{\theta,j}$ depend only on a finite dimensional
projection $P_{j}\theta:=(\theta_{1},\dots,\theta_{l_{j}})$ of $\theta\in\Theta$
for some integer $1\le l_{j}\le d_{j}^{2}$, $j\in\mathcal{I}$. Moreover, let
$K_{\theta,j}$ be Lipschitz continuous with respect to $\theta$
in the following sense: 
\[
\|K_{\theta,j}-K_{\theta',j}\|_{V_{j}\to V_{j}}\le L\|P_{j}(\theta-\theta')\|_{j}\quad\text{for all }\theta,\theta'\in\Theta
\]
where $L>0$ is a constant being independent of $j,\theta,\theta'$ and where $\|\cdot\|_{j}$ is a norm on $P_{j}\Theta$.
We suppose that the norm $\|\cdot\|_{j}$ satisfies $\P_{\theta}\big(\|P_{j}W\|_{j}>C(\kappa+\sqrt{d_{j}})\big)\le\exp(-c\kappa^{2}).$
\end{assumption}

Although projections on $L^{2}$ and on $\Theta$ are both denoted
by $P_{j}$, it will be always clear from the context which is used
such that this abuse of notation is quite convenient. Since $K_{\theta,j}$
is fully described by a $d_{j}\times d_{j}$ matrix, we naturally
have the upper bound $l_{j}\le d_{j}^{2}$. Let us illustrate the
previous assumptions in the models A,B and C from the introduction:
\newpage%%%%%%%%%%%%%%%%%%%%%%%%%%%%%%%%%%%%%%%%%%%%%%%%%%%%%%%%%%%%%%%%%%%%%%%%%%%%%%%%%%%%%%%%%%%%%%%%%%%%%%%%%%%%%%%%%%%%%%
\begin{example}~
  \begin{enumerate}
  \item In Model A we have a finite dimensional parameter space $\Theta\subset\R^{p}$ with fixed $p\in\N$. Assumption~\ref{ass:bases} is, for instance, satisfied if $K_{\theta}f=g_{\theta}\ast f$
  is a convolution operator with a kernel $g_{\theta}$ whose Fourier
  transform has compact support and if we choose a band-limited wavelet basis. Note that in this case we do not have know the SVD of $K_{\theta}$. For Assumption~\ref{ass:operator} we may choose $P_{j}=\Id$ and
  $\|\cdot\|_{j}=|\cdot|$ as the Euclidean distance on $\R^{p}$ leading
  to the Lipschitz condition $\|K_{\theta,j}-K_{\theta',j}\|_{V_{j}\to V_{j}}\le L|\theta-\theta'|$.
  Then $\P_{\theta}\big(|W|>\sqrt{p}\kappa\big)\le2pe^{-\kappa^{2}/2}$
  follows from the Gaussian concentration of $W$.
  \item In Model B let $K_{\theta}$ be compact and let $(e_{i})_{i\ge1}$
  be an orthonormal basis consisting of eigenfunctions with corresponding
  eigenvectors $(\rho_{\theta,i})_{i\ge1}$ and let $(\phi_{k})$ be a wavelet
  basis fulfilling $d_{j}\simeq2^{dj}$. Then Assumption~\ref{ass:bases}
  is satisfied if there is some $C>0$ such that $\langle e_{i},\phi_{k}\rangle\neq0$
  only if $C^{-1}2^{dk}\le i\le C2^{dk}$. Since then $\langle e_{k},v\rangle=0$
  for any $v\in V_{j}$ if $k\ge C2^{dj}$, we moreover have for any
  $v\in V_{j}$
  \begin{align*}
  \big\|(K_{\theta,j}-K_{\theta',j})v\big\|^{2} & =\big\| P_{j}\sum_{i\ge1}(\rho_{\theta,i}-\rho_{\theta',i})\langle e_{i},v\rangle e_{i}\big\|^{2}\\
  & \le\sup_{i\le C2^{dj}}|\rho_{\theta,i}-\rho_{\theta',i}|^{2}\sum_{i\le C2^{dj}}\langle e_{i},v\rangle^{2}\le\sup_{i\le C2^{dj}}|\rho_{\theta,i}-\rho_{\theta',i}|^{2}\|v\|^{2}.
  \end{align*}
  We thus choose $l_{j}=C2^{dj}\simeq d_{j}$ and $\|\cdot\|_{j}$ as
  the supremum norm on $P_{j}\Theta$. Since $W_{k}$ are i.i.d. Gaussian,
  we have for some $c>0$
  \begin{align*}
  \P\big(\sup_{k\le C2^{dj}}|W_{k}|>\kappa+\sqrt{c\log d_{j}}\big) & \le2C2^{dj}e^{-(\kappa^{2}+c\log d_{j})/2}\le2Ce^{-\kappa^{2}/2}.
  \end{align*}
  Therefore, Assumption~\ref{ass:operator} is satisfied.
  \item In Model C the projected operators $K_{\theta,j}$ are given by $\R^{d_{j}\times d_{j}}$
  matrices. Assumption~\ref{ass:bases} is satisfied if and only if all $K_{\theta,j}$ are band matrices with some fixed bandwidth $m$ independent from $j$ and $\theta$. To verify Assumption~\ref{ass:operator}, $\|\cdot\|_{j}$ can be chosen as the operator norm or
  spectral norm of these matrices. The Lipschitz condition is then obviously
  satisfied. Moreover $P_{j}WP_{j}$ is a $\R^{d_{j}\times d_{j}}$
  random matrix where all entries are i.i.d. $\mathcal{N}(0,1)$ random
  variables. A standard result for i.i.d. random matrices is the bound
  $\E[\|P_{j}WP_{j}\|_{V_{j}\to V_{j}}]\lesssim\sqrt{d_{j}}$ for the
  operator norm, cf. \cite[Cor. 2.3.5]{tao2012}. Together with the
  Borell-Sudakov-Tsirelson concentration inequality for Gaussian processes,
  cf. \cite[Thm. 2.5.8]{gineNickl2016}, we immediately obtain the
  concentration inequality in Assumption~\ref{ass:operator}. 
  \end{enumerate}
\end{example}

Finally, the degree of ill-posedness of $K_{\theta}$ can be quantified
by the smoothing effect of the operator:
\begin{assumption}
\label{ass:ill-posed}For a decreasing sequence $(\sigma_{j})_{j}\subset(0,\infty)$
and some constant $Q>0$ let the operator $K_{\theta}$ satisfy $Q^{-1}\sum_{k}\sigma_{|k|}\langle f,\phi_{k}\rangle^{2}\le\langle K_\theta f,f\rangle\le Q\sum_{k}\sigma_{|k|}\langle f,\phi_{k}\rangle^{2}$
for all $f\in L^{2}$ and $\theta\in\Theta$.
\end{assumption}
Note that Assumptions~\ref{ass:bases} and \ref{ass:ill-posed} with $\sigma_j\downarrow0$ imply that $K_\theta$ is compact, because it can be approximated by the operator sequence $K_\theta P_j$ having finite dimensional ranges. The rate of the decay of $\sigma_j$ will determine the degree of ill-posedness of the inverse problem. If $\sigma_j$ decays polynomially or exponentially, we obtain a mildly or severely illposed problem, respectively.

Recall that the nuisance parameter $\theta$ cannot be consistently
estimated without additional assumptions. Therefore, we study the
contraction rate of the marginal posterior distribution $\Pi(f\in\cdot|Y,T)$.
While we allow for a general prior $\Pi_{f}$ on $L^{2}$ for $f$,
we will use a product prior on $\theta$. For densities $\beta_{k}$
on $\R$ we thus consider prior distributions of the form
\begin{equation}
\d\Pi(f,\theta)=\d\Pi_{f}(f)\otimes\bigotimes_{k\ge1}\beta_{k}(\theta_{k})\d\theta_{k}.\label{eq:prior}
\end{equation}
\begin{thm}
\label{thm:contraction}Consider the model (\ref{eq:obsY}), (\ref{eq:obsT})
generated by some $f_{0}\in L^{2}$ and $\theta_{0}\in\Theta$ with
$\epsilon=\epsilon_{n}\to0$ and $\delta=\delta_{n}\to0$ for $n\to\infty$,
respectively, and let Assumptions \ref{ass:bases}, \ref{ass:operator}
and \ref{ass:ill-posed} be satisfied. Let $\Pi_{n}$ be a sequence
of prior distributions of the form (\ref{eq:prior}) on the Borel-$\sigma$-algebra
on $L^{2}\times\Theta$. Let $(\kappa_{n}),(\xi_{n})$ two positive
sequences converging to zero and $(j_{n})$ a sequence of integers
with $j_{n}\to\infty$. Suppose $\kappa_{n}/(\epsilon_{n}\vee\delta_{n})\to\infty$
as $n\to\infty$ as well as 
\[
d_{j_{n}}\le c_{1}\frac{\kappa_{n}^{2}}{(\epsilon_{n}\vee\delta_{n})^{2}},\quad\frac{\kappa_{n}}{\sigma_{j_{n}}}\le c_{2}\xi_{n}\quad\text{and}\quad\frac{\delta_{n}}{\sigma_{j_{n}}}\sqrt{d_{j_{n}}}\to0
\]
for constants $c_{1},c_{2}>0$ and all $n\ge0.$ Suppose $f_{0}$
satisfies $\|f_{0}\|\le R$ and $\|f_{0}-P_{j_{n}}(f_{0})\|\le C_{0}\xi_{n}$
for some $R,C_{0}>0$. Let $\F_{n}\subset\{f\in L^{2}:\|f-P_{j_{n}}f\|\le C_{0}\xi_{n}\}$
be a sequence and $C_{1}>0$ such that
\begin{equation}
\Pi_{n}(L^{2}\setminus\mathcal{F}_{n})\le e^{-(C_{1}+4)\kappa_{n}^{2}/(\epsilon_{n}\vee\delta_{n})^{2}}.\label{eq:smallBias}
\end{equation}
Moreover assume for sufficiently large $n$ 
\begin{multline}
\Pi_{n}\Big((f,\theta)\in V_{j_{n}}\times\Theta:\\
\qquad\frac{\|P_{j_{n}+m}(K_{\theta}f-K_{\theta_{0}}f_{0})\|^{2}}{\epsilon_{n}^{2}}+\frac{\|P_{j_{n}+m}(\theta-\theta_{0})\|^{2}}{\delta_{n}^{2}}\le\frac{\kappa_{n}^{2}}{(\epsilon_{n}\vee\delta_{n})^{2}}\Big)\ge e^{-C_{1}\kappa_{n}^{2}/(\epsilon_{n}\vee\delta_{n})^{2}}.\label{eq:smallBall}
\end{multline}
Then there exists a finite constant $D>0$ such that the posterior
distribution from (\ref{eq:posterior}) satisfies
\begin{equation}
\Pi_{n}(f\in V_{j_{n}}:\|f-f_{0}\|>D\xi_{n}|Y,T)\to0\label{eq:rate}
\end{equation}
as $n\to\infty$ in $\P_{f_{0},\theta_{0}}$-probability.
\end{thm}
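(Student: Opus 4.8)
The plan is to follow the general posterior-contraction machinery of \citet{ghosalEtAl2000} in the testing-based formulation, combining (i) a lower bound on the normalising constant (the denominator of \eqref{eq:posterior}) coming from the small ball condition \eqref{eq:smallBall}, (ii) a sieve $\F_n$ on which the likelihood is controlled, provided by \eqref{eq:smallBias}, and (iii) the existence of exponentially powerful tests for $H_0:f=f_0$ against $H_1:\|f-f_0\|>D\xi_n$, constructed as a plug-in test from the Galerkin estimator $\hat f_n$ on $V_{j_n}$.

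First I would treat the denominator. On the event where the integrand in \eqref{eq:smallBall} holds, a standard computation for the Gaussian log-likelihood $p_{f,\theta}(Z,W)$ shows that its expectation is bounded below and, by Chebyshev/Gaussian concentration of the linear terms $\epsilon^{-1}\langle K_\theta f-K_{\theta_0}f_0,Z\rangle$ and $\delta^{-1}\langle\theta-\theta_0,W\rangle$, that $p_{f,\theta}(Z,W)\ge e^{-c\kappa_n^2/(\epsilon_n\vee\delta_n)^2}$ on an event of $\P_{f_0,\theta_0}$-probability tending to one; restricting the integral in the denominator to the small-ball set and using \eqref{eq:smallBall} then gives a lower bound of the form $e^{-C\kappa_n^2/(\epsilon_n\vee\delta_n)^2}$ for the normalising constant, with high probability. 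Here one exploits that only the projection $P_{j_n+m}(\theta-\theta_0)$ enters, which is legitimate because Assumption~\ref{ass:bases} forces $\langle K_\theta f,\phi_k\rangle$ for $|k|\le j_n$ to depend only on $P_{j_n+m}\theta$ (and, for $f\in V_{j_n}$, on $P_{j_n}f$); the contribution of the high-frequency coordinates of $\theta-\theta_0$ to $\|\theta-\theta_0\|^2$ is irrelevant since those coordinates do not affect $K_{\theta,j}$, and by Kakutani/product structure they integrate out cleanly against the product prior $\bigotimes_k\beta_k$.

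Second, the test. I would take the Galerkin/projection estimator: solve $\langle K_{\hat\theta_{j_n},j_n}\hat f,v\rangle = \langle Y,v\rangle$ for $v\in V_{j_n}$, where $\hat\theta_{j_n}$ is built from the $l_{j_n}$ relevant coordinates of $T$ (so $\|P_{j_n}(\hat\theta-\theta_0)\|_{j_n}\lesssim\delta_n(\kappa_n/\delta_n+\sqrt{d_{j_n}})$ with high probability by Assumption~\ref{ass:operator}). Using Assumption~\ref{ass:ill-posed} to lower-bound the singular values of $K_{\theta_0,j_n}$ by $\sigma_{j_n}$ (up to constants), the Lipschitz bound of Assumption~\ref{ass:operator} to control the operator perturbation $\|K_{\hat\theta,j_n}-K_{\theta_0,j_n}\|\lesssim\delta_n\sqrt{d_{j_n}}$, and the condition $\frac{\delta_n}{\sigma_{j_n}}\sqrt{d_{j_n}}\to0$ to guarantee invertibility of $K_{\hat\theta,j_n}$ for large $n$, a now-standard error decomposition (stochastic error $\lesssim \epsilon_n\sqrt{d_{j_n}}/\sigma_{j_n}\le \kappa_n/\sigma_{j_n}\lesssim\xi_n$, operator error, and bias $\|f_0-P_{j_n}f_0\|\lesssim\xi_n$) yields $\|\hat f-f_0\|\lesssim\xi_n$ with $\P_{f_0,\theta_0}$-probability tending to one, plus the requisite exponential deviation bound. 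The plug-in test $\psi_n=\1\{\|\hat f-f_0\|>D\xi_n/2\}$ then has type-I error $\to0$ and, on the sieve, type-II error bounded by $\exp(-c'\kappa_n^2/(\epsilon_n\vee\delta_n)^2)$ uniformly over $\{f\in\F_n:\|f-f_0\|>D\xi_n\}$, because there $\|\hat f - f\|\le\|\hat f-f_0\|-\|f-f_0\|$... more precisely one bounds $\|K_{\theta,j_n}(P_{j_n}f - \hat f)\|$ from below using the separation and turns this into an exponential bound on $p_{f,\theta}(Z,W)$-integrals via the likelihood structure.

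Finally I would assemble the three ingredients in the usual way: write $\Pi_n(\|f-f_0\|>D\xi_n\mid Y,T)\le \psi_n + \1\{\text{denominator small}\} + (1-\psi_n)\1\{\text{denominator large}\}\,\Pi_n(\F_n^c\mid Y,T) + (1-\psi_n)\1\{\text{denom large}\}\,\Pi_n(\{f\in\F_n:\|f-f_0\|>D\xi_n\}\mid Y,T)$; the first two terms vanish in probability by the test bound and the denominator bound; the $\F_n^c$ term is controlled by \eqref{eq:smallBias} against the denominator lower bound (choosing $D$ via the constant $C_1$ in the exponent, which is why $C_1+4$ appears in \eqref{eq:smallBias}); and the last term is controlled by integrating the type-II error of $\psi_n$ against the prior and dividing by the denominator, again with the exponent $C_1\kappa_n^2/(\epsilon_n\vee\delta_n)^2$ matched so that the net exponent is negative for $D$ large. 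I expect the main obstacle to be the test construction in this unknown-operator setting—specifically, showing that plugging the noisy $\hat\theta$ into the Galerkin scheme does not destroy invertibility or inflate the error, which is exactly where the interplay of Assumptions~\ref{ass:operator} and~\ref{ass:ill-posed} and the rate condition $\frac{\delta_n}{\sigma_{j_n}}\sqrt{d_{j_n}}\to0$ must be used carefully, together with keeping the type-II error exponential rather than merely $o(1)$.
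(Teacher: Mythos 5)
Your overall architecture is the same as the paper's: a Ghosal--Ghosh--van der Vaart type argument with (i) a denominator lower bound obtained from \eqref{eq:smallBall} via the standard lemma (the paper invokes Lemma 7.3.4 of Gin\'e--Nickl), (ii) the sieve bound \eqref{eq:smallBias}, and (iii) a plug-in test built from the Galerkin estimator \eqref{eq:estimator} with the noisy operator $K_T$; you also correctly identify the key structural point that, for $f\in V_{j_n}$ and a product prior on $\theta$, the high-frequency coordinates of $\theta$ factor out of both numerator and denominator, so that only $P_{j_n+m}(K_\theta f-K_{\theta_0}f_0)$ and $P_{j_n+m}(\theta-\theta_0)$ matter, exactly as in the paper's reduction to $p^{(j_n)}_{f,\theta}$ (one small point you leave implicit, which the paper checks, is that the test is a function of the projected data only, so that $\Psi_n(Y,T)=\Psi_n(Y',T')$ under the modified law whose density is $p^{(j_n)}_{f,\theta}$).

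There is, however, one genuine gap: the uniformity of the type-II error over the alternative. The deviation bound of Proposition~\ref{prop:concHatF} under $\P_{f,\theta}$ has threshold $C\sigma_{j_n}^{-1}(\|f\|\vee1)\kappa_n+\|f-P_{j_n}f\|$, with the $\|f\|$ factor coming from the operator-perturbation term $K_{\theta,j}^{-1}\Delta_{T,j}P_jf$; the sieve $\F_n$ only constrains $\|f-P_{j_n}f\|$, not $\|f\|$, so your claimed stochastic error of order $\epsilon_n\sqrt{d_{j_n}}/\sigma_{j_n}$ (which drops the $\|f\|$ factor) does not give an exponential type-II bound that is uniform over $\{f\in\F_n:\|f-f_0\|>D\xi_n\}$ with a fixed threshold $D\xi_n/2$ unless $D$ is tuned to absorb this dependence. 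This is precisely the subtlety the paper flags after Proposition~\ref{prop:concHatF} and resolves in \eqref{eq:alternative}: choosing $D=D_2(1+R)$ and using $\|f\|\le R+\|f-f_0\|$ together with $\kappa_n/\sigma_{j_n}\le c_2\xi_n$, the separation $\|f-f_0\|$ itself dominates $C\sigma_{j_n}^{-1}\sqrt{C_1+4}\kappa_n\|f\|+(C_0+D_1)\xi_n$, which is what makes $\E_{f,\theta}[1-\Psi_n]\le 3e^{-(C_1+4)\kappa_n^2/(\epsilon_n\vee\delta_n)^2}$ hold uniformly. Your closing remark about lower-bounding $\|K_{\theta,j_n}(P_{j_n}f-\hat f)\|$ ``using the separation'' does not address this norm-dependence and, as stated, the argument would fail without the additional step; with it, your proof coincides with the paper's.
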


Theorem~\ref{thm:contraction} states that the posterior distribution $\Pi(f\in\cdot|Y,T)$ is consistent and concentrates asymptotically its whole probability mass in a ball around the true $f_0$ with decaying radius $D\xi_n\downarrow0$, that is, the posterior ``contracts to $f_0$'' with the rate $\xi_n$. This result is similarly to \citet[Theorem 2.1]{ray2013} who has
proven a corresponding theorem for known operators. However, the contraction
rate is now determined by the maximum $\epsilon\vee\delta$ instead
of $\epsilon$, which is natural in view of the results by \citet{HoffmannReiss2008}
who have included the case $\delta>\epsilon$ in their frequentist analysis. 

To gain some intuition on the interplay between $\kappa_n$ and the noise level $\epsilon_n\vee\delta_n$, let us set for simplicity $m=0$ in Assumption~\ref{ass:bases} and $\epsilon_n=\delta_n$. Using Assumption~\ref{ass:ill-posed} (with Lemma~\ref{lem:galerkin}) and Assumption~\ref{ass:operator}, we then can decompose
\begin{align*}
  \|f-f_0\|&\le\|P_{j_n}f_0-f_0\|+\|f-P_{j_n}f_0\|\\
  &\lesssim \|P_{j_n}f_0-f_0\|+\sigma_{j_n}^{-1}\|K_{\theta}f-K_{\theta}P_{j_n}f_0\|\\
  &\le\|P_{j_n}f_0-f_0\|+\sigma_{j_n}^{-1}\|K_{\theta}f-K_{\theta_0}P_{j_n}f_0\|+\sigma_{j_n}^{-1}L\|P_{j_n}(\theta-\theta_0)\|_j\|f_0\|
\end{align*}
The first term in the last line is the approximation error being bounded by $\xi_n$. It corresponds to the classical bias. Indeed, the prior sequence $\Pi_n$ is concentrated on a subset of $\{f:\|f-P_{j_n} f\|\le C_0\xi_n\}$ due to \eqref{eq:smallBias} such that the projection of $f$ to the level $j_n$ serves as reference measure for the prior and the deterministic error remains bounded by $\xi_n$. The  last two terms in the previous display correspond to the stochastic errors in $f$ and $\theta$ and are of the order $\kappa_{n}/\sigma_{j_{n}}$ owing the the minimal spread of $\Pi_n$ imposed by the small ball probability condition \eqref{eq:smallBall}. In particular, we recover the ill-posedness of the inverse problem due to $\sigma_{j_{n}}\to0$ in the denominator. To obtain the best possible contraction rate, we need to choose $j_{n}$ in way that ensures that $\xi_{n}$ is close to $\kappa_{n}/\sigma_{j_{n}}$, i.e., we will balance the deterministic and the stochastic error. The conditions on the dimension $d_{j_n}$ are mild technical assumptions.

The crucial small ball probability assumption \eqref{eq:smallBall} ensures that the prior sequence $\Pi_n$ has some minimal mass in a neighbourhood of the underlying $f_0$ and $\theta_0$. The distance from $(f_0,\theta_0)$ is measured in a (semi-)metric which reflects the structure of our inverse problem. If $\epsilon_n=\delta_n$, it would be sufficient if $\|K_\theta f-K_{\theta_0} f_0\|$ and $\|\theta-\theta_0\|$ are smaller than $\kappa_n$. However, condition \eqref{eq:smallBall} is more subtle. Firstly, the maximum of $\epsilon$
and $\delta$ on the right-hand side within the probability introduces
some difficulties. The prior has to weight a smaller neighbourhood
of $K_{\theta_{0}}f_{0}$ or $\theta_{0}$, respectively, depending
on whether $\epsilon$ is smaller than $\delta$ or the other way
around. If, for instance, $\epsilon<\delta$ the contraction rate
is determined by $\delta$ but the prior has to put enough probability
to the smaller $\epsilon$-ball around $K_{\theta_{0}}f_{0}$. We
see such effects also in the construction of lower bounds, cf. \citep{HoffmannReiss2008},
where we may have in the extreme case a $\delta$ distance between
$f$ and $f_{0}$ while $K_{\theta_{0}}f_{0}=K_{\theta}f$. Secondly,
(\ref{eq:smallBall}) depends only on finite dimensional projections
of both $K_{\theta}f$ and $\theta$. This is particularly important
as we do not assume any regularity conditions on $\theta$ such that
we cannot expect the projection remainder $(\Id-P_{j+m})\theta$ to
be small. 

To allow for this relaxed small ball probability condition,
the contraction rate is restricted to the set $V_{j}$. The result
can be extended to $L^{2}$ by appropriate constructions of the prior,
in particular, if the support of $\Pi_{n}$ is contained in $V_{j}$
we can immediately replace $V_{j}$ by $L^{2}$ in (\ref{eq:rate}).
Another possibility are general product prior if the basis is chosen
according to the singular value decomposition of $K_{\theta}$. 

To prove Theorem~\ref{thm:contraction}, we use the techniques by \citet[Thm. 2.1]{ghosalEtAl2000}, cf. also \citep[Thm. 7.3.5]{gineNickl2016}. A main step is the construction
of tests for the testing problem
\[
H_{0}:f=f_{0}\qquad\text{vs.}\qquad H_{1}:f\in\mathcal{F}_{n},\|f-f_{0}\|\ge D\xi_{n}.
\]
To this end, we first study a frequentist estimator of $f$ which
then allows to construct a plug in test as proposed by \citet{gineNickl2011}.

The natural estimator for $\theta$ is $T$ itself. In order to estimate
$f$, we use a linear Galerkin method based on the perturbed operator
$K_{T}$ similar to the approaches in \citep{efromovichKoltchinskii2001,HoffmannReiss2008}.
We thus aim for a solution $\hat{f}_{\epsilon,\delta}\in V_{j}$ to
\begin{equation}\label{eq:galerkin}
\langle K_{T}\hat{f}_{\epsilon,\delta},v\rangle=\langle Y,v\rangle\qquad\text{for all }v\in V_{j}.
\end{equation}
Choosing $v\in\{\phi_{k}:|k|\le j\}$, we obtain a system of linear equations
depending only on the projected operator $K_{T,j}$. There is a unique
solution if $K_{T,j}$ is invertible. Noting that for the unperturbed
operator $K_{\theta,j}$ Assumption~\ref{ass:ill-posed} implies
$\|K_{\theta,j}^{-1}\|_{V_{j}\to V_{j}}\le Q\sigma_{j}^{-1}$ (cf.
Lemma~\ref{lem:galerkin} below), we set
\begin{equation}
\hat{f}_{j}:=\begin{cases}
K_{T,j}^{-1}P_{j}Y, & \text{if }\|K_{T,j}^{-1}\|_{V_{j}\to V_{j}}\le\tau/\sigma_{j},\\
0, & \text{otherwise},
\end{cases}\label{eq:estimator}
\end{equation}
for a projection level $j$ and a cut-off parameter $\tau>0$. Adopting
ideas from \citep{gineNickl2011,HoffmannReiss2008}, we obtain the
following non-asymptotic concentration result for the estimator $\hat{f}_{j}$. 
\begin{prop}
\label{prop:concHatF}Let $j\in\N$, $\kappa>0$ such that $d_{j}\le C_{1}\kappa^{2}/(\epsilon\vee\delta)^{2}$
for some $C_{1}>0$. Under Assumptions~\ref{ass:operator} and \ref{ass:ill-posed}
there are constants $c,C>0$ such that, if $\delta\sigma_{j}^{-1}(\kappa+\sqrt{d_{j}})\le c\frac{\tau-Q}{\tau Q}$
and $\tau>Q$, then $\hat{f}_{j}$ from (\ref{eq:estimator}) fulfils
\[
\P_{f,\theta}\Big(\|\hat{f}_{j}-f\|\ge C\sigma_{j}^{-1}(\|f\|\vee1)\kappa+\|f-P_{j}f\|\Big)\le3e^{-\kappa^{2}/(\epsilon\vee\delta)^{2}}.
\]
\end{prop}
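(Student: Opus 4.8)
The plan is to decompose the estimation error on the event where the cut-off does not kick in and then control the two relevant sources of error: the perturbation of the operator (replacing $K_\theta$ by $K_T$) and the noise in $Y$. First I would write, on the event $A:=\{\|K_{T,j}^{-1}\|_{V_j\to V_j}\le\tau/\sigma_j\}$,
\[
\hat f_j - P_j f = K_{T,j}^{-1}\bigl(P_j Y - K_{T,j}P_j f\bigr)
 = K_{T,j}^{-1}\Bigl(P_j K_\theta f - K_{\theta,j}P_j f + (K_{\theta,j}-K_{T,j})P_j f + \epsilon\, P_j Z\Bigr),
\]
and then add and subtract to reach $\|\hat f_j - f\|\le \|f-P_j f\| + \|\hat f_j - P_j f\|$, bounding the last term by $\tau\sigma_j^{-1}$ times the three contributions above. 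The Galerkin consistency term $\|P_j K_\theta f - K_{\theta,j}P_j f\|$ is the standard Galerkin bias; by Assumption~\ref{ass:bases} (band structure) together with Lemma~\ref{lem:galerkin} it is controlled by $\|f-P_j f\|$ up to constants, so it is absorbed into that term. The operator-perturbation term is bounded, using Assumption~\ref{ass:operator}, by $L\|P_j(T-\theta)\|_j\,\|f\| = L\delta\|P_j W\|_j\,\|f\|$. The noise term is $\epsilon\|P_j Z\|$, a chi random variable with $d_j$ degrees of freedom.

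Next I would handle the event $A^c$ and the concentration separately. For the tail of $\|P_j W\|_j$, Assumption~\ref{ass:operator} gives $\P_\theta(\|P_j W\|_j > C(\kappa/(\epsilon\vee\delta)+\sqrt{d_j}))\le \exp(-c\kappa^2/(\epsilon\vee\delta)^2)$ after rescaling $\kappa\mapsto\kappa/(\epsilon\vee\delta)$; combined with the hypothesis $d_j\le C_1\kappa^2/(\epsilon\vee\delta)^2$ this shows $\delta\sigma_j^{-1}\|P_j W\|_j\|f\|\lesssim \delta\sigma_j^{-1}(\kappa/(\epsilon\vee\delta)+\sqrt{d_j})\|f\|$, which on the prescribed range of parameters is at most $C\sigma_j^{-1}\kappa\|f\|$ (using $\delta\le\epsilon\vee\delta$ and the smallness condition $\delta\sigma_j^{-1}(\kappa+\sqrt{d_j})\le c(\tau-Q)/(\tau Q)$). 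For the Gaussian noise term, a standard chi-square tail bound gives $\P(\epsilon\|P_j Z\| > \epsilon\sqrt{d_j} + t)\le\exp(-t^2/(2\epsilon^2))$; taking $t\simeq\kappa$ and again using $\epsilon\sqrt{d_j}\lesssim\kappa$ yields $\epsilon\|P_j Z\|\lesssim\kappa$ with probability at least $1-\exp(-c\kappa^2/\epsilon^2)\ge 1-\exp(-c\kappa^2/(\epsilon\vee\delta)^2)$. Finally, to show $\P_{f,\theta}(A^c)$ is small, note that on the complementary event the triangle-type bound $\|K_{T,j}^{-1}\|\le \|(K_{\theta,j})^{-1}\|(1-\|(K_{\theta,j})^{-1}\|\,\|K_{\theta,j}-K_{T,j}\|)^{-1}$ (valid by the Neumann series once $\|(K_{\theta,j})^{-1}\|\,\|K_{\theta,j}-K_{T,j}\|<1$) together with $\|(K_{\theta,j})^{-1}\|\le Q\sigma_j^{-1}$ from Lemma~\ref{lem:galerkin} shows that $\|K_{T,j}^{-1}\|\le\tau/\sigma_j$ whenever $\sigma_j^{-1}\|K_{\theta,j}-K_{T,j}\|\le c(\tau-Q)/(\tau Q)$; by Assumption~\ref{ass:operator} this is implied by $\delta\sigma_j^{-1}\|P_j W\|_j\le c(\tau-Q)/(\tau Q)$, whose failure has probability at most $\exp(-c\kappa^2/(\epsilon\vee\delta)^2)$ by the concentration bound and the assumed smallness of $\delta\sigma_j^{-1}(\kappa+\sqrt{d_j})$.

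Putting the three exceptional events together with a union bound gives the claimed $3e^{-\kappa^2/(\epsilon\vee\delta)^2}$ (adjusting the constant $c$ in the exponents of the concentration inequalities to match the normalisation $1$ on the right-hand exponent, or equivalently relabelling $\kappa$), and on the complementary event all three error contributions are bounded by a constant multiple of $\sigma_j^{-1}(\|f\|\vee1)\kappa + \|f-P_j f\|$, with the factor $\|f\|\vee1$ covering the case $\|f\|\le1$ in the noise term. The main obstacle I anticipate is the bookkeeping around the cut-off: one must verify that the smallness condition $\delta\sigma_j^{-1}(\kappa+\sqrt{d_j})\le c(\tau-Q)/(\tau Q)$ is exactly what is needed both to make the Neumann-series argument for $\|K_{T,j}^{-1}\|\le\tau/\sigma_j$ go through on the good event \emph{and} to keep the operator-perturbation error $\tau\sigma_j^{-1}\cdot L\delta\|P_j W\|_j\|f\|$ of the correct order $\sigma_j^{-1}\kappa\|f\|$; threading a single choice of the absolute constant $c$ through both uses, while tracking how $\tau$ enters, is the delicate part. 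The rest is routine Gaussian concentration and the deterministic Galerkin estimates from Lemma~\ref{lem:galerkin}.
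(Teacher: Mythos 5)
Your proposal follows essentially the same route as the paper: restrict to the event where the Neumann series argument combined with Lemma~\ref{lem:galerkin} gives $\|K_{T,j}^{-1}\|_{V_{j}\to V_{j}}\le\tau/\sigma_{j}$ (so the cut-off is inactive), decompose the error into the bias $\|f-P_{j}f\|$, the operator-perturbation term of order $\sigma_{j}^{-1}\delta\|P_{j}W\|_{j}\|f\|$ and the noise term $\epsilon\sigma_{j}^{-1}\|P_{j}Z\|$, and control the stochastic parts via the concentration bound of Assumption~\ref{ass:operator} and a Gaussian (chi-type) tail bound for $\|P_{j}Z\|$, finishing with a union bound over the three exceptional events. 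The only deviation is that you carry the Galerkin consistency term $P_{j}K_{\theta}(\Id-P_{j})f$ explicitly, which the paper's displayed identity silently drops; this is harmless (it is absorbed into the $\|f-P_{j}f\|$ contribution, via the bound of Remark~\ref{rem:Kf2} rather than Lemma~\ref{lem:galerkin} alone), so the argument is correct and matches the paper's proof.
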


Note that some care will be needed to analyse the above mentioned
tests since also the stochastic error term $\sigma_{j}^{-1}(\|f\|\vee1)\kappa$
depends on the unknown function $f$ and, for instance, a Gaussian
prior on $f$ will not sufficiently concentrate on a fixed ball $\{f\in L^{2}:\|f\|\le R\}$. 

\begin{rem}\label{rem:notAdjoint}
  While the assumption that $K_\theta$ is self-adjoint simplifies the analysis and the presentation of our approach, the methodology can be generalised to general compact operators $K_\theta$. In this case Assumption~\ref{ass:ill-posed} should be replaced by the assumption $\|K_\theta f\|^2\simeq\sum_k\sigma_{|k|}^2\langle f,\phi_k\rangle^2$ which is consistent with the original condition, cf. Remark~\ref{rem:Kf2}. The Galerkin projection method \eqref{eq:galerkin} can then be generalised to solve
  \[
    \langle K_T^*K_{T}\hat{f}_{\epsilon,\delta},v\rangle=\langle Y,K_Tv\rangle\qquad\text{for all }v\in V_{j},
  \]
  cf. \citet[Appendix A]{CohenEtAl2004}. This modified estimator should have a similar behaviour as above such that we can construct the tests which we needed to prove Theorem~\ref{thm:contraction}. The rest of the proof of the contraction theorem and the subsequent results would remain as before.
\end{rem}

\section{A truncated product prior and the resulting rates\label{sec:productPrior}}

For the ease of clarity we fix a ($S$-regular) wavelet basis $(\phi_{k})_{k\in\{-1,0,1,\dots\}\times\Z}$
of $L^{2}$ with the associated approximation spaces $V_{j}=\Span\{\phi_{k}:|k|\le j\}$.
We write $|k|=|(j,l)|=j$ as before. Investigating a bounded
domain $\mathcal{D}\subset\R^{d}$, we have in particular $d_{j}\simeq2^{jd}$.
The regularity of $f$ will be measured in the Sobolev balls
\begin{equation}
H^{s}(R):=\Big\{ f\in L^{2}([0,1])\,:\,\|f\|_{H^{s}}^{2}:=\sum_{j=-1}^{\infty}2^{2sj}\sum_{l}\langle f,\phi_{j,l}\rangle^{2}\le R^{2}\Big\},\qquad s\in\R.\label{eq:hoelderball}
\end{equation}
We will use Jackson's inequality and the Bernstein inequality: For
$-S<s\le t<S$ and $f\in H^{s}$, $g\in V_{j}$ we have
\begin{equation}
\|(\Id-P_{j})f\|_{H^{s}}\lesssim2^{-j(t-s)}\|f\|_{H^{t}}\quad\text{and}\quad\|g\|_{H^{t}}\lesssim2^{j(t-s)}\|g\|_{H^{s}}.\label{eq:approx}
\end{equation}
\begin{rem}
\label{rem:FourierBasis}The subsequent analysis applies also to the
trigonometric as well as the sine basis in the case of periodic functions.
Considering more specifically $L_{per}^{2}([0,1])=\{f\in L^{2}([0,1]):f(0)=f(1)=0\}$,
we may set $\phi_{k}=\sqrt{2}\sin(\pi k\cdot)$ for $k\in\N$. Since
$\|f\|_{H^{s}}^{2}\simeq\sum_{k\ge1}j^{2s}\langle f,\phi_{k}\rangle^{2}$
holds for any $f\in L_{per}^{2}([0,1])$, it is then easy to see
that the inequalities (\ref{eq:approx}) are satisfied for $V_{j}=\Span\{\phi_{1},\dots,\phi_{j}\}$
if $2^{j}$ is replaced by $j$. Alternatively we may set $V_{j}=\Span\{\phi_{1},\dots,\phi_{2^{j}}\}$
which gives exactly (\ref{eq:approx}).
\end{rem}

For $\theta$ we use the product prior as in (\ref{eq:prior}) with
a fixed density $\beta_{k}=\beta$. For $f$ we also a apply a product
prior. More precisely, we take a prior $\Pi_{f}$ determined by the
random series
\[
f(x)=\sum_{|k|\le J}\tau_{|k|}\Phi_{k}\phi_{k}(x),\qquad x\in[0,1],
\]
for a sequence $(\tau_{j})_{j\ge-1}$, i.i.d. random coefficients
$\Phi_{k}$ (independent of $\theta_{k}$) distributed according to
a density $\alpha$ and a cut-off $J\in\N$. Hence,
\begin{equation}
\d\Pi(\theta,f)=\prod_{|k|\le J}\tau_{|k|}^{-d}\alpha(\tau_{|k|}^{-1}f_{k})\,\d f_{k}\cdot\prod_{k\ge1}\beta(\theta_{k})\,\d\theta_{k}.\label{eq:productPrior}
\end{equation}
Under appropriate conditions on the distributions $\alpha,\beta$
and on $J$ we will verify the conditions of Theorem~\ref{thm:contraction}. 
\begin{assumption}
\label{ass:densities}There are constants $\gamma,\Gamma>0$ such that the densities $\alpha$ and $\beta$ satisfy 
\[
\alpha(x)\wedge\beta(x)\ge\Gamma e^{-\gamma|x|^{2}}\qquad\text{for all }x\in\R.
\]
\end{assumption}

Assumption~\ref{ass:densities} is very weak and is satisfied for
many distributions with unbounded support, for example, Gaussian,
Cauchy, Laplace distributions or Student's $t$-distribution. Also uninformative priors where $\alpha$ or $\beta$ are constant are included. A consequence
of the previous assumption is that any random variable $\Phi$ with
probability density $\alpha$ (or $\beta$) satisfies
\begin{equation}
\P(|\Phi-x|\le\kappa)\ge\Gamma\int_{|y|\le \kappa}e^{-\gamma|y+x|^2}\d y\ge2\Gamma\kappa e^{-\gamma(|x|+\kappa)^{2}}\qquad\text{for all }\kappa>0,x\in\R.\label{eq:raysBound}
\end{equation}
This lower bound will be helpful
to verify the small ball probabilities (\ref{eq:smallBall}). 

To apply Theorem~\ref{thm:contraction}, we choose $J=j_{n}$ to
ensure that the support of $\Pi_{f}$ lies in $\{f\in\mathcal{F}:\|P_{j_{n}}(f)-f\|\le C_{1}\xi_{n}\}$.
Note that the optimal $j_{n}$ is not known in practice. We will discuss
the a data-driven choice of $J$ in Section~\ref{sec:Adaptation}.
Alternatively to truncating the random series for $f$, the small
bias condition could be satisfied if $(\tau_{j})$ decays sufficiently
fast and $\alpha$ has bounded support, as it is the case for uniform
wavelet priors. 

We start with the mildly ill-posed case imposing $\sigma_{j}=2^{-jt}$
for some $t>0$ in Assumption~\ref{ass:ill-posed}. In this case
the operators $K_{\theta}$ are naturally adapted to Sobolev scale,
since then $K_{\theta}\colon L^{2}\to H^{t}$ is continuous with $\|K_{\theta}f\|\lesssim\|f\|_{H^{t}}$, cf. Remark~\ref{rem:Kf2}. 
\begin{thm}
\label{thm:mildly}Let $\epsilon^{\eta}\lesssim\delta\lesssim\epsilon$
for some $\eta>1$ and let Assumptions~\ref{ass:bases}, \ref{ass:operator}
with $l_{j}\le2^{jd}$, Assumption~\ref{ass:ill-posed} with $\sigma_{j}=2^{-jt}$
for some $t>0$ as well as Assumption~\ref{ass:densities} be fulfilled.
Then the posterior distribution from (\ref{eq:posterior}) with prior
given by (\ref{eq:productPrior}) where $J$ is chosen such that $2^{J}=\big(\epsilon\log(1/\epsilon)\big)^{-2/(2s+2t+d)}$
and $\frac{c}{j}2^{-j(2s_{0}+d)}\le\tau_{j}^{2}\le2^{Cj}$ for constants
$c,C>0$ and some $0<s_{0}<s$ satisfies for any $f_{0}\in H^{s}(R)$
and $\theta_{0}\in\Theta_{0}$
\[
\Pi_{n}\Big(f\in L^{2}:\|f-f_{0}\|>D\big(\epsilon\log(1/\epsilon)\big)^{2s/(2s+2t+d)}\Big|Y,T\Big)\to0
\]
with some constant $D>0$ and in $\P_{f_{0},\theta_{0}}$-probability.
\end{thm}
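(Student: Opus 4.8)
The plan is to apply Theorem~\ref{thm:contraction} to the prior (\ref{eq:productPrior}) on the $S$-regular wavelet basis with $d_j\simeq 2^{jd}$, taking $j_n:=J$, $\F_n:=V_{j_n}$ and, with the shorthand $a:=2s+2t+d$,
\[
  \xi_n:=\big(\epsilon\log(1/\epsilon)\big)^{2s/a},\qquad \kappa_n:=M\,\sigma_{j_n}\xi_n=M\big(\epsilon\log(1/\epsilon)\big)^{2(s+t)/a},
\]
for a constant $M>0$ to be fixed large below; note $\sigma_{j_n}=2^{-Jt}$ and $2^{-Js}=\xi_n$. First I would check the numerical hypotheses: using $\delta\lesssim\epsilon$ (so $\epsilon\vee\delta\simeq\epsilon$) one gets $\kappa_n/\sigma_{j_n}=M\xi_n$, $\kappa_n/(\epsilon\vee\delta)\simeq M\epsilon^{-d/a}(\log(1/\epsilon))^{2(s+t)/a}\to\infty$, $d_{j_n}\simeq\kappa_n^2/\big(M^2\epsilon^2(\log(1/\epsilon))^2\big)$ and $\tfrac{\delta}{\sigma_{j_n}}\sqrt{d_{j_n}}\lesssim\epsilon^{2s/a}(\log(1/\epsilon))^{-(2t+d)/a}\to0$, so for $n$ large the three conditions hold with $c_2=M$ and any $c_1>0$. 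Jackson's inequality (\ref{eq:approx}) gives $\|f_0-P_{j_n}f_0\|\lesssim 2^{-Js}\|f_0\|_{H^s}\le R\xi_n$ and $\|f_0\|\lesssim R$, so the hypotheses on $f_0$ are met; and since $\Pi_f$ is supported on $V_J=V_{j_n}$ we have $\F_n=V_{j_n}\subset\{f:\|f-P_{j_n}f\|=0\le C_0\xi_n\}$ with $\Pi_n(L^2\setminus\F_n)=0$, so (\ref{eq:smallBias}) is trivial.

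The real work is the small-ball condition (\ref{eq:smallBall}). Put $j':=j_n+m$; Assumption~\ref{ass:bases} gives $K_\theta f\in V_{j'}$ for $f\in V_{j_n}$, hence $P_{j'}K_\theta f=K_\theta f=K_{\theta,j'}f$. For $f\in V_{j_n}$ with $\|f-P_{j_n}f_0\|\le\rho_f$ and $\theta$ with $\|P_{j'}(\theta-\theta_0)\|\le\rho_\theta$, the triangle inequality, Assumption~\ref{ass:operator} (with $\|\cdot\|_{j'}$ bounded by the ambient norm, as in Models~A--C), the bound $\|K_{\theta_0}g\|^2\simeq\sum_k\sigma_{|k|}^2\langle g,\phi_k\rangle^2$ of Remark~\ref{rem:Kf2}, and $f_0\in H^s(R)$ (giving $\|K_{\theta_0}(\Id-P_{j_n})f_0\|\lesssim R\,2^{-J(s+t)}=R\sigma_{j_n}\xi_n$) yield
\[
  \|P_{j'}(K_\theta f-K_{\theta_0}f_0)\|\le\|(K_{\theta,j'}-K_{\theta_0,j'})f\|+\|K_{\theta_0}(f-P_{j_n}f_0)\|+\|K_{\theta_0}(\Id-P_{j_n})f_0\|\lesssim L\rho_\theta R+\rho_f+R\sigma_{j_n}\xi_n.
\]
Choosing $\rho_f:=c\kappa_n$ and $\rho_\theta:=c\,\delta\kappa_n/\epsilon$ with $c>0$ small, and using $\sigma_{j_n}\xi_n=\kappa_n/M$ and $\delta/\epsilon\lesssim1$, the right side is $\le\kappa_n$ once $M$ is large, while $\|P_{j'}(\theta-\theta_0)\|^2/\delta^2\le c^2\kappa_n^2/\epsilon^2$. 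Hence on the product set $A_n:=\{f\in V_{j_n}:\|f-P_{j_n}f_0\|\le\rho_f\}\times\{\theta:\|P_{j'}(\theta-\theta_0)\|\le\rho_\theta\}$ the expression inside (\ref{eq:smallBall}) is at most $\kappa_n^2/(\epsilon\vee\delta)^2$, so it suffices to show $\Pi_n(A_n)\ge e^{-C_1\kappa_n^2/\epsilon^2}$.

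By the product structure of (\ref{eq:productPrior}), $\Pi_n(A_n)$ factorises, and I would bound it from below by applying (\ref{eq:raysBound}) coordinatewise, with radii $\rho_f/\sqrt{d_{j_n}}$ for the $f$-coordinates $|k|\le J$ and $\rho_\theta/\sqrt{l_{j'}}$ for the $\theta$-coordinates $k\le l_{j'}$. This reduces the claim to showing that the resulting ``volume'', ``$\theta_0$-shift'' and ``$f_0$-shift'' costs are each of smaller order than the available budget $\kappa_n^2/\epsilon^2\simeq M^2 d_{j_n}(\log(1/\epsilon))^2$. The volume cost is $\lesssim d_{j_n}\log\!\big(\sqrt{d_{j_n}}/\rho_f\big)+\sum_{|k|\le J}\log\tau_{|k|}+l_{j'}\log\!\big(\sqrt{l_{j'}}/\rho_\theta\big)$; here $\sum_{|k|\le J}\log\tau_{|k|}\lesssim d_{j_n}J$ by the bound $\tau_j^2\le 2^{Cj}$, $l_{j'}\lesssim 2^{j'd}\simeq d_{j_n}$ by the assumption $l_j\le 2^{jd}$, and $\log(1/\rho_\theta)\lesssim\log(1/\kappa_n)+\log(\epsilon/\delta)\lesssim\eta\log(1/\epsilon)$ because $\delta\gtrsim\epsilon^\eta$, so the whole volume cost is $\lesssim d_{j_n}\log(1/\epsilon)=o(\kappa_n^2/\epsilon^2)$. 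The $\theta_0$-shift cost is $\lesssim\sum_k|\theta_{0,k}|^2+\rho_\theta^2=O(1)$ since $\Theta_0$ is bounded in $\ell^2$. The $f_0$-shift cost is $\lesssim\sum_{|k|\le J}|f_{0,k}|^2/\tau_{|k|}^2\lesssim R^2\sum_{j\le J}j\,2^{j(2s_0+d-2s)}\lesssim R^2 J\,2^{J(2s_0+d-2s)_{+}}$, using $\tau_j^2\ge\frac cj 2^{-j(2s_0+d)}$ and $\sum_{|k|=j}f_{0,k}^2\le R^2 2^{-2js}$; since $s_0<s$ forces $(2s_0+d-2s)_{+}<d$, this is again $\lesssim d_{j_n}\log(1/\epsilon)=o(\kappa_n^2/\epsilon^2)$, and the remaining $\rho_f$-contribution is negligible by the same estimates (where $s_0<s+t$ suffices). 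Summing the three costs gives $\log\Pi_n(A_n)\ge-C_1\kappa_n^2/\epsilon^2$ for $n$ large, which is (\ref{eq:smallBall}); Theorem~\ref{thm:contraction} then delivers the contraction rate $D\xi_n\simeq\big(\epsilon\log(1/\epsilon)\big)^{2s/a}$.

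The main obstacle is the small-ball estimate for $\theta$: since $\theta$ carries no regularity, the prior must be charged on $\ell^2$-balls of radius only $\delta\kappa_n/\epsilon$ in all $l_{j'}$ relevant coordinates of $\theta_0$, and the corresponding volume cost $l_{j'}\log(\sqrt{l_{j'}}/\rho_\theta)$ has to fit inside $\kappa_n^2/\epsilon^2$. This is precisely what forces both the structural restriction $l_j\le 2^{jd}$ and the lower bound $\delta\gtrsim\epsilon^\eta$ (the latter keeping $\log(\epsilon/\delta)$ of order $\log(1/\epsilon)$), and it is the extra logarithmic factor in the truncation level, $2^J=(\epsilon\log(1/\epsilon))^{-2/a}$ rather than the exactly optimal $2^J\simeq\epsilon^{-2/a}$, that supplies the single factor $\log(1/\epsilon)$ of slack needed to absorb these costs — at the price of the logarithmic loss in the rate.
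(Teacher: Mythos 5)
Your proposal is correct and follows essentially the same route as the paper: apply Theorem~\ref{thm:contraction} with the identical choices of $j_n$, $\xi_n$, $\kappa_n$ (up to constants), note that the truncated prior makes \eqref{eq:smallBias} trivial, and verify \eqref{eq:smallBall} by splitting the product prior and applying \eqref{eq:raysBound} coordinatewise, with the same roles for $l_j\le 2^{jd}$, $\delta\gtrsim\epsilon^{\eta}$, the bounds on $\tau_j$, and $s_0<s$. The only (harmless) deviation is in the small-ball decomposition: you apply the operator difference to $f$ and charge the prior on a plain $L^2$-ball with uniform coordinate radii $\rho_f/\sqrt{d_{j_n}}$, whereas the paper applies it to $f_0$ and works with the larger $H^{-t}$-ball with level-dependent radii $\tilde\kappa 2^{(t-d/2)|k|}$; both versions incur a cost of order $d_{j_n}\log(1/\epsilon)$, which the log-inflated truncation level absorbs, as you correctly observe.
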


\begin{rem}
This theorem is restricted to the case $\epsilon\gtrsim\delta$. However,
its proof reveals that in the special case where $m=0$, for instance,
if $(\phi_{k})$ are eigenfunctions, the condition $\epsilon^{\eta}\lesssim\delta\lesssim\epsilon$
can be weakened to $\log\delta\simeq\log\epsilon$, which also allows
for $\epsilon<\delta$. The second restriction is $l_{j}\le2^{jd}$
which is especially satisfied in the model $B$ of unknown eigenvalues
in the singular value decomposition of $K_{\theta}$. Larger $l_{j}$
could be incorporated if we put some structure on $\Theta$ which
allows for applying a different prior on $\theta$ with better concentration
of $P_{j}\theta$.
\end{rem}

The contraction rate coincides with the minimax optimal convergence
rate, as determined in \citep{cavalierHengartner2005,HoffmannReiss2008}
for specific settings of $\theta\mapsto K_{\theta}$, up to the logarithmic
term. The conditions on $\tau_{j}$ are very weak and allow for a
large flexibility in the choice of prior, particularly, a constant
$\tau_{j}=1$ for all $j$ is included. In contrast, the choice of
the cut-off parameter $J$ is crucial and depends on the regularity
$s$ of $f_{0}$ and the ill-posedness $t$ of the operator. 

In the severely ill-posed case the contraction rates deteriorates
to a logarithmic dependence on $\epsilon\vee\delta$ and coincide
again with the minimax optimal rate.
\begin{thm}
\label{thm:severely}Let $\log\epsilon\simeq\log\delta$ and let Assumptions~\ref{ass:bases},
\ref{ass:operator}, Assumption~\ref{ass:ill-posed} with $\sigma_{j}=\exp(-r2^{jt})$
for some $r,t>0$ as well as Assumption~\ref{ass:densities} be fulfilled.
Then the posterior distribution from (\ref{eq:posterior}) with prior
given by (\ref{eq:productPrior}) where $J$ is chosen such that $2^{J}=\big(-\frac{1}{2r}\log(\epsilon\vee\delta)\big)^{1/t}$
and $2^{-j(2s_0+t+d)}\le\tau_{j}^{2}\le\exp(C2^{jt})$ for a constant
$C>0$ satisfies for any $f_{0}\in H^{s}(R)$ and $\theta_{0}\in\Theta_{0}$
\[
\Pi_{n}\Big(f\in L^{2}:\|f-f_{0}\|>D\big(\log(\epsilon\vee\delta)^{-1}\big)^{-s/t}\Big|Y,T\Big)\to0
\]
with some constant $D>0$ and in $\P_{f_{0},\theta_{0}}$-probability.
\end{thm}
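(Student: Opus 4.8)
The plan is to verify the hypotheses of the general contraction result, Theorem~\ref{thm:contraction}, with the choices $j_n = J$ given by $2^{J} = \big(-\tfrac1{2r}\log(\epsilon\vee\delta)\big)^{1/t}$, $\xi_n = \big(\log(\epsilon\vee\delta)^{-1}\big)^{-s/t}$, and an appropriate auxiliary sequence $\kappa_n$. The overall structure mirrors the proof of the mildly ill-posed case, Theorem~\ref{thm:mildly}, except that the exponential decay $\sigma_j = \exp(-r2^{jt})$ forces the projection level $j_n$ to grow only doubly-logarithmically in $1/(\epsilon\vee\delta)$, which is what produces the logarithmic rate. First I would record the elementary consequences of this choice of $J$: with $2^{Jt} = -\tfrac1{2r}\log(\epsilon\vee\delta)$ we get $\sigma_{j_n} = \exp(-r2^{Jt}) = (\epsilon\vee\delta)^{1/2}$, hence $(\epsilon\vee\delta)/\sigma_{j_n} = (\epsilon\vee\delta)^{1/2}\to 0$; also $d_{j_n}\simeq 2^{Jd} = \big(-\tfrac1{2r}\log(\epsilon\vee\delta)\big)^{d/t}$ grows only poly-logarithmically, so $d_{j_n}(\epsilon\vee\delta)\to 0$ and $\frac{\delta}{\sigma_{j_n}}\sqrt{d_{j_n}}\to 0$ trivially. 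I would then set $\kappa_n := \sigma_{j_n}\xi_n = (\epsilon\vee\delta)^{1/2}\xi_n$, so that $\kappa_n/\sigma_{j_n} = \xi_n$ holds exactly (giving $c_2 = 1$), and check $\kappa_n/(\epsilon_n\vee\delta_n) = \xi_n/(\epsilon\vee\delta)^{1/2}\to\infty$ since $\xi_n$ decays only logarithmically while $(\epsilon\vee\delta)^{1/2}$ decays polynomially; the dimension condition $d_{j_n}\le c_1\kappa_n^2/(\epsilon\vee\delta)^2 = c_1 d_{j_n}\cdot\text{(poly-log)}/(\epsilon\vee\delta)$ is then immediate for large $n$.

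Next I would handle the bias and the small-bias prior mass condition \eqref{eq:smallBias}. Since the prior \eqref{eq:productPrior} is supported on $V_J$, we have $\mathcal F_n = V_J$ (so $\Pi_n(L^2\setminus\mathcal F_n) = 0$ and \eqref{eq:smallBias} is vacuous), and the truncation guarantees $\|f - P_{j_n}f\| = 0$ for all $f$ in the support, while for the truth Jackson's inequality \eqref{eq:approx} gives $\|f_0 - P_{j_n}f_0\| \lesssim 2^{-j_n s}\|f_0\|_{H^s} \le R\,2^{-Js} = R\,\big(-\tfrac1{2r}\log(\epsilon\vee\delta)\big)^{-s/t} \simeq \xi_n$, so the condition $\|f_0 - P_{j_n}f_0\|\le C_0\xi_n$ holds. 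The bound $\|f_0\|\le R$ is assumed. The remaining and main task is the small ball probability \eqref{eq:smallBall}: I would bound
\[
\|P_{j_n+m}(K_\theta f - K_{\theta_0}f_0)\| \le \|K_{\theta_0,j_n+m}(f - f_0)\| + \|(K_{\theta,j_n+m} - K_{\theta_0,j_n+m})f\|,
\]
estimate the first term by $Q\sigma_{j_n+m}\|f - f_0\|\lesssim \sigma_{j_n}\|f-f_0\|$ using Assumption~\ref{ass:ill-posed} (via Lemma~\ref{lem:galerkin}) together with $\sigma_{j_n+m}\le\sigma_{j_n}$, and the second by $L\|P_{j_n}(\theta-\theta_0)\|_j\,\|f\|$ via Assumption~\ref{ass:operator}. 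It then suffices to require $\|f - f_0\| \lesssim \kappa_n/(\epsilon\cdot\sigma_{j_n}^{-1})^{-1}$-type bounds; concretely, requiring $\|f - P_J f_0\|$, the truncated tail of $f_0$, and $\|P_{j_n}(\theta-\theta_0)\|_j$ to all be of order $\kappa_n/\sigma_{j_n} \wedge (\epsilon\vee\delta)$ up to constants. Using the coordinatewise lower bound \eqref{eq:raysBound}, the prior mass of such a neighbourhood is at least a product over $|k|\le J$ of factors $2\Gamma\rho_k\tau_{|k|}^{-1}e^{-\gamma(\cdots)}$ for the $f$-coordinates and over $k\le l_{j_n}$ of analogous factors for the $\theta$-coordinates; taking logarithms, the dominant contribution is $-\gamma\sum_{|k|\le J}\tau_{|k|}^{-2}(f_{0,k})^2 - \sum\log(1/\rho_k)$-type terms, and the lower bounds $\tau_j^2\ge 2^{-j(2s_0+t+d)}$ and $\tau_j^2\le\exp(C2^{jt})$ are exactly what is needed to keep $\sum_{|k|\le J}\tau_{|k|}^{-2}(f_{0,k})^2 \lesssim \sum_j 2^{j(2s_0+t+d)}2^{-2js}\cdot(\text{summable})$ bounded (since $s > s_0$ and the $2^{jt}$ and $2^{jd}$ grow slower than... actually one checks $2^{j(2s_0+t+d-2s)}$ is summable when $s_0$ is close enough to $s$, but more carefully the tail is controlled by the worst term at $j = J$, which is $\lesssim 2^{J(2s_0+t+d)}\xi_n^2$) and the number of factors $d_J + l_{j_n}\lesssim 2^{Jd}$ together with $-\log\rho_k$ and $-\log\tau_k^{-1}$ contribute at most $C2^{Jt}\cdot 2^{Jd} \lesssim \kappa_n^2/(\epsilon\vee\delta)^2$. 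This gives $\log$ of the small-ball probability $\ge -C_1\kappa_n^2/(\epsilon\vee\delta)^2$ for a suitable $C_1$, as required.

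The main obstacle, and the step deserving the most care, is the bookkeeping in \eqref{eq:smallBall}: one must simultaneously (i) absorb the Lipschitz error $L\|P_{j_n}(\theta-\theta_0)\|_j\|f\|$ — which is delicate because $\|f\|$ is random under the prior and unbounded for heavy-tailed $\alpha$, so one should intersect with an event $\{\|f\|\le R'\}$ of non-negligible prior probability, or exploit that on $V_J$ the relevant norm is controlled coordinatewise; (ii) verify that the $\epsilon^{-2}$ weight on $\|P_{j_n+m}(K_\theta f - K_{\theta_0}f_0)\|^2$ and the $\delta^{-2}$ weight on $\|P_{j_n+m}(\theta-\theta_0)\|^2$ are both compatible with $\kappa_n^2/(\epsilon\vee\delta)^2$ on the right — this is why the neighbourhood radii must be taken of order $(\epsilon\wedge\delta)$ in the worst direction, not merely $(\epsilon\vee\delta)$; and (iii) ensure the exponent $-\gamma\sum\tau_{|k|}^{-2}f_{0,k}^2$ coming from \eqref{eq:raysBound} stays $o(\kappa_n^2/(\epsilon\vee\delta)^2)$ — here the severe ill-posedness actually helps, since $\kappa_n^2/(\epsilon\vee\delta)^2 = \xi_n^2/(\epsilon\vee\delta)$ blows up, while the $f_0$-dependent sum is bounded by a constant depending only on $R, s, s_0$. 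Once these three points are dispatched, Theorem~\ref{thm:contraction} applies verbatim and yields \eqref{eq:rate} with $\xi_n = \big(\log(\epsilon\vee\delta)^{-1}\big)^{-s/t}$, which is the claimed rate. The condition $\log\epsilon\simeq\log\delta$ enters only through identifying $(\epsilon\vee\delta)$ consistently in the logarithmic factors and through the requirement $\frac{\delta}{\sigma_{j_n}}\sqrt{d_{j_n}}\to 0$, which is automatic here.
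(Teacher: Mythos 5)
Your overall route is the paper's: fix $j_n=J$, $\xi_n\simeq\big(\log(\epsilon\vee\delta)^{-1}\big)^{-s/t}$ and $\kappa_n$ of order $(\epsilon\vee\delta)^{1/2}$ (your extra factor $\xi_n$ is harmless), check the side conditions of Theorem~\ref{thm:contraction} as you do, and reduce everything to the small ball condition \eqref{eq:smallBall}. The genuine gap is inside that last step. You bound the first term of your decomposition by $Q\sigma_{j_n+m}\|f-f_0\|$, citing Assumption~\ref{ass:ill-posed} ``via Lemma~\ref{lem:galerkin}''. Lemma~\ref{lem:galerkin} states $\|K_{\theta,j}^{-1}\|_{V_j\to V_j}\le Q\sigma_j^{-1}$, i.e.\ the \emph{lower} bound $\|K_{\theta,j}g\|\ge Q^{-1}\sigma_j\|g\|$ for $g\in V_j$; it does not give $\|K_{\theta,j}g\|\le Q\sigma_j\|g\|$, and that inequality is false (take $g=\phi_0$: $\|K_\theta \phi_0\|\simeq\sigma_0$, not $\sigma_j=e^{-r2^{jt}}$). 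The valid upper bound is the frequency-weighted one of Remark~\ref{rem:Kf2}, $\|K_\theta g\|^2\lesssim\sum_k\sigma_{|k|}^2\langle g,\phi_k\rangle^2$, in which low frequencies carry weight of order one; this is exactly why the paper does not work with a single $L^2$-ball of radius $\kappa_n/\sigma_{j_n}$ but with level-dependent coordinatewise balls $|f_k-f_{0,k}|\le c\,\tilde\kappa\,2^{-d|k|/2}e^{r2^{|k|t}}$, $\tilde\kappa=\epsilon\kappa/(\epsilon\vee\delta)$, before applying \eqref{eq:raysBound}. Your later prescription (radii of order $\kappa_n/\sigma_{j_n}\wedge(\epsilon\vee\delta)$) is therefore unsupported as written: its sufficiency rests on the false operator bound, and the garbled ``$\kappa_n/(\epsilon\cdot\sigma_{j_n}^{-1})^{-1}$-type bounds'' does not repair it. It can be salvaged in the severely ill-posed case, because requiring plain $L^2$-closeness at scale $\epsilon\wedge\delta$ only costs a poly-logarithmic exponent against the polynomially large budget $\kappa_n^2/(\epsilon\vee\delta)^2$, but that argument (or the paper's weighted one) still has to be made, including the control of the tail of $f_0$ through the weights $e^{-2r2^{|k|t}}$ rather than in plain $L^2$, where it is of order $\xi_n\gg\epsilon\vee\delta$.

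A second, smaller defect is your splitting $K_\theta f-K_{\theta_0}f_0=K_{\theta_0}(f-f_0)+(K_\theta-K_{\theta_0})f$, which attaches the Lipschitz factor to the random $\|f\|$; you flag the problem yourself but leave it open. The paper uses the opposite decomposition, $(K_\theta-K_{\theta_0})f_0+K_\theta(f-f_0)$, so that the operator increment multiplies the deterministic $\|f_0\|\le R$ and the $\theta$-factor is handled exactly as in the proof of Theorem~\ref{thm:mildly}; alternatively one can check that on the (correct, weighted) coordinatewise small-ball event $\|f-P_Jf_0\|\lesssim\sqrt{J}\,\xi_n$, so $\|f\|\le R+1$ automatically, but some such argument must be supplied rather than gestured at. With the operator bound corrected and the Lipschitz term put on $f_0$, the rest of your plan does yield the stated rate.
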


\section{Adaptation via empirical Bayes\label{sec:Adaptation}}

We saw above that the choice of the projection level $J$ of the prior
depends on the unknown regularity $s$ (and the ill-posedness $t$)
in order to achieve the optimal rate. We will now discuss how $J$
can be chosen purely data-driven resulting in an empirical Bayes procedure
that adapts on $s$. Noting that choice of $J$ in Theorem~\ref{thm:severely}
is already independent of $s$, we focus on the mildly ill-posed case
and $\delta\lesssim\epsilon$.

The method is based on the observation that all conditions on the
level $j_{n}$ in Theorem~\ref{thm:contraction} are monotone (in
the sense that they are also satisfied for all $j$ smaller than the
optimal $j_{n}$) except for the bias condition on $\|f_{0}-P_{j_{n}}f_{0}\|\lesssim\xi_{n}$.
Given the optimal $J_{o}=j_{n}$, the so-called oracle, the result
in Theorem~\ref{thm:mildly} continues to hold for any, empirically
chosen $\hat{J}$ satisfying 
\[
\hat{J}\le J_{o}\qquad\text{and}\qquad\|f_{0}-P_{\hat{J}}f_{0}\|\lesssim\xi_{n}.
\]
To find $\hat{J}$, we use Lepski's method \citep{lepski1990} which
is generally known for these two properties. 

In Proposition~\ref{prop:concHatF} we saw that the variance of the
estimator $\hat{f}_{j}$ from (\ref{eq:estimator}) is of the order
$\epsilon^{2}d_{j}/\sigma_{j}^{2}=\epsilon^{2}2^{2jt+jd}$. For some
fixed lower bound $s_{0}$ on the regularity $s$ of $f_{0}\in H^{s}$
let 
\[
\mathcal{J}_{\epsilon}=\Big\lfloor\frac{\log\epsilon^{-1}}{(s_{0}+t+d/2)\log2}\Big\rfloor
\]
where $\lfloor x\rfloor$ denotes be the largest integer smaller than
$x$. The choice of $\mathcal{J}_{\epsilon}$ allows for applying
the concentration inequality from Propotion~\ref{prop:concHatF}
to all $\hat{f}_{j}$ with $j\le\mathcal{J}_{\epsilon}$. We then
choose
\[
\hat{J}:=\min\big\{ j\in\{1,\dots,\mathcal{J}_{\epsilon}\}:\|\hat{f}_{i}-\hat{f}_{j}\|\le\Delta\epsilon(\log\epsilon^{-1})^{2}2^{i(t+d/2)}\forall i>j\big\}
\]
for a constant $\Delta\in(0,1]$ which can be chosen by the practitioner.
The idea of the choice $\hat{J}$ is as follows: Starting with large
$j$ the projection estimator $\hat{f}_{j}$ has a small bias, but
a standard deviation of order $\epsilon2^{j(t+d/2)}$. Decreasing
$j$ reduces the variance while the bias increases. At the point where
there is some $i>j$ such that $\|\hat{f}_{i}-f_{0}\|+\|\hat{f}_{j}-f_{0}\|\ge\|\hat{f}_{i}-\hat{f}_{j}\|$
is larger than the order of the variance the bias starts dominating
the estimation error. At this point we stop lowering $j$ and select
$\hat{J}$.
\begin{thm}
\label{thm:adaptive}Let $\epsilon^{\eta}\lesssim\delta\lesssim\epsilon$
for some $\eta>1$ and let Assumptions~\ref{ass:bases}, \ref{ass:operator}
with $l_{j}\le2^{jd}$, Assumption~\ref{ass:ill-posed} with $\sigma_{j}=2^{-jt}$
for some $t>0$ as well as Assumption~\ref{ass:densities} be fulfilled.
Then the posterior distribution from (\ref{eq:posterior}) with prior
given by (\ref{eq:productPrior}) with $\hat{J}$ instead of $J$
and $\frac{c}{j}2^{-j(2s_{0}+d)}\le\tau_{j}^{2}\le2^{Cj}$ for constants
$c,C>0$ and some $0<s_{0}<s$ satisfies for any $f_{0}\in H^{s}(R)$
and $\theta_{0}\in\Theta_{0}$
\[
\Pi_{n}\Big(f\in L_{2}:\|f-f_{0}\|>D(\log\epsilon^{-1})^{\chi}\epsilon^{2s/(2s+2t+d)}\Big|Y,T\Big)\to0
\]
with some constant $D>0$, $\chi=(4s+2t+d)/(2s+2t+d)$ and in $\P_{f_{0},\theta_{0}}$-probability.
\end{thm}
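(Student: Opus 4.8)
\emph{Structure.} The proof couples the frequentist behaviour of Lepski's selector $\hat J$ with a conditional version of Theorem~\ref{thm:contraction}, in the spirit of the discussion preceding the statement. Write $J_o$ for the oracle cut-off of Theorem~\ref{thm:mildly} (so $2^{J_o}=(\epsilon\log(1/\epsilon))^{-2/(2s+2t+d)}$), $\xi_n=(\log\epsilon^{-1})^{\chi}\epsilon^{2s/(2s+2t+d)}$ for the target rate, $b_j:=\|f_0-P_jf_0\|$, and $\bar v_j:=\Delta\epsilon(\log\epsilon^{-1})^2 2^{j(t+d/2)}$ for the threshold in the definition of $\hat J$; by Jackson's inequality \eqref{eq:approx} and $f_0\in H^s(R)$ we have $b_j\lesssim 2^{-js}$, and both $\hat f_j$ and $\hat J$ are $\sigma(Y,T)$-measurable. \emph{Step 1 (uniform concentration).} I first upgrade Proposition~\ref{prop:concHatF} to a bound valid simultaneously for all $j\le\mathcal J_\epsilon$: applying it with $\kappa=\kappa_j:=c(\epsilon\vee\delta)\sqrt{d_j}\log\epsilon^{-1}$ — for which $d_j\le C_1\kappa_j^2/(\epsilon\vee\delta)^2$ and, since $s_0>0$ and $\delta\lesssim\epsilon$, the smallness requirement on $\delta\sigma_j^{-1}(\kappa_j+\sqrt{d_j})$ holds for $\epsilon$ small and $j\le\mathcal J_\epsilon$ — and taking a union bound over $j\in\{1,\dots,\mathcal J_\epsilon\}$ yields an event $\mathcal A_n$ with $\P_{f_0,\theta_0}(\mathcal A_n)\ge 1-3\mathcal J_\epsilon e^{-(\log\epsilon^{-1})^2}\to1$ on which $\|\hat f_j-f_0\|\le v_j+b_j$ for all $j\le\mathcal J_\epsilon$, where $v_j:=C(R\vee1)\epsilon(\log\epsilon^{-1})2^{j(t+d/2)}$ satisfies $v_j/\bar v_j\to0$. (The factor $(\log\epsilon^{-1})^2$ in the Lepski threshold is exactly what keeps $v_j$ negligible against $\bar v_j$ after this logarithmic inflation of $\kappa$.)

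\emph{Step 2 (Lepski trade-off).} Set $J^*:=\max\{j\le\mathcal J_\epsilon:b_j\le\tfrac14\bar v_j\}$; balancing $2^{-js}\simeq\epsilon(\log\epsilon^{-1})^2 2^{j(t+d/2)}$ gives $2^{J^*}\simeq(\epsilon(\log\epsilon^{-1})^2)^{-2/(2s+2t+d)}$, and since $s>s_0$ this obeys $J^*\le\mathcal J_\epsilon$ and $J^*\le J_o$ for small $\epsilon$. On $\mathcal A_n$, for every $i$ with $J^*<i\le\mathcal J_\epsilon$ one has $\|\hat f_i-\hat f_{J^*}\|\le v_i+b_i+v_{J^*}+b_{J^*}\le\bar v_i$ (using $v_i,v_{J^*}\le\tfrac14\bar v_i$ and $b_i\le b_{J^*}\le\tfrac14\bar v_{J^*}\le\tfrac14\bar v_i$), so $J^*$ belongs to the set over which $\hat J$ is minimised, whence $\hat J\le J^*\le J_o$. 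If $\hat J<J^*$, the minimality of $\hat J$ applied with $i=J^*$ gives $\|\hat f_{J^*}-\hat f_{\hat J}\|\le\bar v_{J^*}$, so
\[
b_{\hat J}\le b_{J^*}+\|(P_{J^*}-P_{\hat J})f_0\|\le b_{J^*}+\|\hat f_{J^*}-f_0\|+\|\hat f_{\hat J}-f_0\|+\|\hat f_{J^*}-\hat f_{\hat J}\|\lesssim\bar v_{J^*},
\]
and $b_{\hat J}\lesssim\bar v_{J^*}$ holds trivially if $\hat J=J^*$. Since $\bar v_{J^*}\simeq 2^{-J^*s}\simeq(\epsilon(\log\epsilon^{-1})^2)^{2s/(2s+2t+d)}$, elementary algebra shows $\bar v_{J^*}\lesssim\xi_n$, so on $\mathcal A_n$ we have $\hat J\le J_o$ and $\|f_0-P_{\hat J}f_0\|\le C_0\xi_n$ for a constant $C_0>0$.

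\emph{Step 3 (conditioning on $\hat J$ and summing).} On $\{\hat J=j\}$ the prior \eqref{eq:productPrior} with cut-off $\hat J$ coincides with the fixed-cut-off-$j$ product prior, so the posterior in \eqref{eq:posterior} equals the corresponding fixed-cut-off posterior $\Pi^{(j)}(\cdot\mid Y,T)$ on that event. Hence, for $\eta>0$,
\[
\P_{f_0,\theta_0}\big(\Pi_n(\|f-f_0\|>D\xi_n\mid Y,T)>\eta\big)\le\P_{f_0,\theta_0}(\mathcal A_n^c)+\sum_{j}\P_{f_0,\theta_0}\big(\{\hat J=j\}\cap\mathcal A_n\cap\{\Pi^{(j)}(\|f-f_0\|>D\xi_n\mid Y,T)>\eta\}\big).
\]
By Step~2, on $\mathcal A_n\cap\{\hat J=j\}$ one has $j\le J_o$ and $b_j\le C_0\xi_n$; with $b_j\lesssim 2^{-js}$, $2^{J_o}=(\epsilon\log\epsilon^{-1})^{-2/(2s+2t+d)}$ and the stated $\chi=(4s+2t+d)/(2s+2t+d)$, such $j$ lie in a window $[j_-,J_o]$ of length $O(\log\log\epsilon^{-1})$. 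For each such $j$ I re-run the proof of Theorem~\ref{thm:mildly}: every hypothesis of Theorem~\ref{thm:contraction} except the bias condition is monotone in the level and hence holds at $j\le J_o$ with a level-dependent sequence $\kappa_n^{(j)}\simeq(\epsilon\vee\delta)\sqrt{d_j\log\epsilon^{-1}}$; the bias condition is $\|f_0-P_jf_0\|\le C_0\xi_n$ from Step~2 (with the coarser rate $\xi_n$); and \eqref{eq:smallBias}, \eqref{eq:smallBall} for the cut-off-$j$ product prior follow from Assumption~\ref{ass:densities} and \eqref{eq:raysBound} exactly as in the proof of Theorem~\ref{thm:mildly}. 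This gives $\P_{f_0,\theta_0}(\Pi^{(j)}(\|f-f_0\|>D\xi_n\mid Y,T)>\eta)\le g_n(j)$ with $g_n(j)\lesssim e^{-cd_j}$; summing over the $O(\log\log\epsilon^{-1})$ admissible levels, $\sum_j g_n(j)\lesssim(\log\log\epsilon^{-1})e^{-cd_{j_-}}\to0$ since $d_{j_-}\to\infty$ faster than any power of $\log\epsilon^{-1}$. Combined with $\P_{f_0,\theta_0}(\mathcal A_n^c)\to0$, this is the assertion.

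\emph{Main obstacle.} The crux is Step~3: the cut-off entering the prior is itself random, so the test construction and small-ball arguments underlying Theorems~\ref{thm:contraction}--\ref{thm:mildly} must be carried out simultaneously over all levels $\hat J$ can realistically attain, and the corresponding exceptional probabilities summed. This is only affordable because Lepski's calibration (Steps~1--2) confines $\hat J$ to within $O(\log\log\epsilon^{-1})$ levels of the oracle $J_o$ — for which the choice of $\chi$ must be taken sufficiently large — while those exceptional probabilities are of order $e^{-cd_j}$ and therefore dominate a polylogarithmic number of levels with ample room. A secondary technical point is the uniformisation of Proposition~\ref{prop:concHatF} in Step~1, whose cost (a logarithmic inflation of $\kappa$) is precisely what the factor $(\log\epsilon^{-1})^2$ in the Lepski threshold absorbs.
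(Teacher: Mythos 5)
Your Steps 1--2 follow the paper's Step 1 in spirit (uniform concentration of $\hat f_j$ plus the Lepski trade-off) and, apart from two repairable slips, are fine: $J^*$ must be a \emph{minimum}, not a maximum (as written the set $\{j:b_j\le\tfrac14\bar v_j\}$ is upward closed, so your $J^*=\mathcal J_\epsilon>J_o$), and your bound for $b_{\hat J}$ is circular, since $\|\hat f_{\hat J}-f_0\|$ is itself only controlled by $v_{\hat J}+b_{\hat J}$; the clean route is $b_{\hat J}\le\|f_0-\hat f_{\hat J}\|\le\|f_0-\hat f_{J^*}\|+\|\hat f_{J^*}-\hat f_{\hat J}\|\lesssim\bar v_{J^*}$. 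The genuine gap is in Step 3. First, the claim that on $\mathcal A_n\cap\{\hat J=j\}$ the level $j$ is confined to a window $[j_-,J_o]$ of length $O(\log\log\epsilon^{-1})$ is false: the inequalities $b_j\le C_0\xi_n$ and $b_j\lesssim 2^{-js}$ are \emph{upper} bounds on the bias and give no lower bound on $j$; e.g.\ if $f_0\in V_1\cap H^s(R)$ then Lepski's rule selects a very small $\hat J$ with probability close to one, so $\{\hat J=j\}\cap\mathcal A_n$ is not empty (let alone negligible) for $j$ far below $J_o$. Second, after intersecting with $\{\hat J=j\}\cap\mathcal A_n$ you drop this event and bound by the \emph{unconditional} probability $\P_{f_0,\theta_0}(\Pi^{(j)}(\|f-f_0\|>D\xi_n\mid Y,T)>\eta)$, while the bias condition $\|f_0-P_jf_0\|\le C_0\xi_n$ you feed into Theorem~\ref{thm:contraction} holds only on the event you just discarded. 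For a level $j$ with $b_j\gg\xi_n$ the fixed-cutoff-$j$ posterior charges only $V_j$, so this unconditional probability equals $1$, and your claimed $g_n(j)\lesssim e^{-cd_j}$ fails; moreover, even where Theorem~\ref{thm:contraction} applies, its conclusion (\ref{eq:boundRate}) contains the non-exponential term $(\epsilon\vee\delta)^2/\kappa^2$ from the evidence lower bound, so the per-level bound is not of the form $e^{-cd_j}$ (this last point is harmless for summability, the first two are not).

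The repair is either to split the levels deterministically --- for $j$ with $b_j\le C_0\xi_n$ run the fixed-level argument unconditionally, and for $j$ with $b_j>C_0\xi_n$ observe that Step 2 forces $\{\hat J=j\}\cap\mathcal A_n=\emptyset$ --- and then sum over all $j\le J_o$, which is affordable since $\sum_j d_j^{-1}<\infty$; or to follow the paper, which avoids any control of $b_{\hat J}$ altogether: it builds a single plug-in test $\tilde\Psi=\1\{\|\hat f_{\hat J}-f_0\|\ge 2\epsilon(\log\epsilon^{-1})^2 2^{J_o(t+d/2)}\}$ calibrated at the oracle level, controls its type-I error through $\hat J\le J_o$ and the Lepski increment $\|\hat f_{\hat J}-\hat f_{J_o}\|\le\epsilon(\log\epsilon^{-1})^2 2^{J_o(t+d/2)}$, controls its type-II error by a union bound over the $\mathcal J_\epsilon$ possible values of $\hat J$, and keeps both $(1-\tilde\Psi)$ and $\{\hat J=j\}$ inside the probability in the final decomposition, summing $O(\log\epsilon^{-1})$ (not $O(\log\log\epsilon^{-1})$) terms. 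As written, your Step 3 does neither, so the argument does not go through without the indicated modification.
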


Note that the empirical Bayes procedure is adaptive with respect to
$s$ and the Sobolev radius $R$. Compared to Theorem~\ref{thm:mildly}
where the oracle choice for $J$ is used, we only lose a logarithmic
factor for adaptivity.

\section{Examples and Simulations\label{sec:Applications} }

\subsection{Heat equation with unknown diffusivity parameter\label{sec:Heat}}

To illustrate the previous theory, we consider the heat equation 
\begin{equation}
\frac{\partial}{\partial t}u(x,t)=\theta\frac{\partial^{2}}{\partial x^{2}}u(x,t),\qquad u(\cdot,0)=f,\qquad u(0,t)=u(1,t)=0\label{eq:heatEq}
\end{equation}
with Dirichlet boundary condition at $x=0$ and $x=1$ and some initial
value function $f\in L^{2}([0,1])$ satisfying $f(0)=f(1)=0$. Different
to \citep{knapikEtAl2013,ray2013} we take an unknown
diffusivity parameter $\theta>0$ into account. A solution to (\ref{eq:heatEq})
is observed at some time $t>0$
\begin{equation}
Y=u(\cdot,t)+\epsilon Z\label{eq:obs}
\end{equation}
with white noise $Z$ on $L^{2}([0,1])$. The aim is to recover $f$
from $Y$.

The solution $u(\cdot,t)$ depends linearly on $f$ via an operator
$K_{\theta}$ which is diagonalised by the sine basis $e_{k}=\sqrt{2}\sin(\pi k\cdot),k\ge1,$
of $L_{per}^{2}([0,1])$ building a system of eigenfunctions of the Laplace
operator. The corresponding eigenvalues of $K_{\theta}$ are given
by $\rho_{\theta,k}:=e^{-\theta\pi^{2}k^{2}t},k\ge1,$ and we obtain
the singular value decomposition 
\[
K_{\theta}f=\sum_{k\ge1}\langle f,e_{k}\rangle\rho_{\theta,k}e_{k}=\sum_{k\ge1}\langle f,e_{k}\rangle e^{-\theta\pi^{2}k^{2}t}\sqrt{2}\sin(\pi k\cdot).
\]

Note that $K_{\theta}$ depends on $\theta$ only via its eigenvalues
$\rho_{\theta,k}$ while the eigenfunctions and thus the considered
basis is independent of $\theta$. Moreover the dependence of $\rho_{\theta,k}$
on $\theta$ is non-linear. From the decay of the eigenvalues we see
that the resulting inverse problem is severely ill-posed with $\sigma_{j}=\exp(-\theta\pi^{2}tj^{2})$.
Since we can easily construct pairs $(\theta,f)$ and $(\theta',f')$
with $K_{\theta}f=K_{\theta'}f'$, the function $f$ is indeed not
identifiable only based on the observation $Y$ and we need the additional
observation $T=\theta+\delta W$ for some $W\sim\mathcal{N}(0,1)$.

Since the eigenfunctions are independent of $\theta$, we can choose
the basis $\phi_{k}=e_{k}$ thanks to Remark~\ref{rem:FourierBasis}.
We moreover apply the truncated product prior \eqref{eq:productPrior} with centered normal densities densities $\alpha$ and $\beta$ and fixed variances $\tau^{2}$ and $\sigma^{2}$. In our numerical example we set $t=0.1$,
\begin{equation}
f_{0}(x)=4x(1-x)(8x-5)\quad\text{and}\quad\theta_{0}=1\label{eq:f0}
\end{equation}
reproducing the same setting as considered in \citep{knapikEtAl2013},
but taking the unknown $\theta$ into account. The Fourier coefficients
of $f_{0}$ with respect to the sine series $\phi_{k}$ are given
by 
\[
f_{0,k}=\langle f_{0},\phi_{k}\rangle=\frac{8\sqrt{2}(13+11(-1)^{k})}{\pi^{3}k^{3}},\qquad k\ge1.
\]
By the decay of the coefficients, we have $f_{0,k}\in H^{s}$ for every
$s<5/2$.

To implement our Bayes procedure, we need to sample from the posterior
distribution which is not explicitly accessible. Fortunately, using
independent normal $\mathcal{N}(0,\tau^{2})$ priors on the coefficients
$f_{k}=\langle f,\phi_{k}\rangle$, we see from~(\ref{eq:posteriorY})
that at least the conditional posterior distribution of $f$ given
$\theta,Y,T$ can be explicitly computed as
\begin{equation}
\Pi(f\in\cdot|\theta,Y,T)=\bigotimes_{k\le J}\mathcal{N}\Big(\frac{\epsilon^{-2}\rho_{\theta,k}^{-1}}{\epsilon^{-2}+\rho_{\theta,k}^{-2}\tau^{-2}}Y_{k},\frac{\rho_{\theta,k}^{-2}}{\epsilon^{-2}+\rho_{\theta,k}^{-2}\tau^{-2}}\Big).\label{eq:conditionalPosterior}
\end{equation}
Profiting from this known conditional posterior distribution, we use
a Gibbs sampler to draw (approximately) from the unconditional posterior
distribution of $f$ given $Y,T$, cf. \citep{tierney1994}. Given
some initial $\theta^{(0)}$, the algorithm alternates between draws
from $f^{(i+1)}|\theta=\theta^{(i)},Y,T$ and $\theta^{(i+1)}|f=f^{(i+1)},Y,T$
for $i\in\N$. The second conditional distribution is not explicitly
given, due to the non-linear dependence of $\rho_{\theta,k}$ from
$\theta$. We apply a standard Metropolis-Hastings algorithm to approximate
the distribution of $\theta|f,Y,T$ using a random walk with $\mathcal{N}(0,v^{2})$
increments as proposal chain. A similar Metropolis-within-Gibbs method
has been used in \citep{knapikEtAl2016} in a comparable simulation
task. Using the sequence $(\theta^{(i)})_{i}$ from this algorithm,
the final Markov chain Monte Carlo (MCMC) approximation of $\Pi(f\in\cdot|Y,T)$
is then given by an average
\[
\frac{1}{M}\sum_{m=1}^{M}\Pi\big(f\in\cdot|\theta=\theta^{(B+m*l)},Y,T\big)
\]
for sufficiently large $B,M,l\in\N$, where we again profit from the
explicitly given conditional posterior distribution (\ref{eq:conditionalPosterior}).

\begin{figure}
  \includegraphics[width=0.5\textwidth]{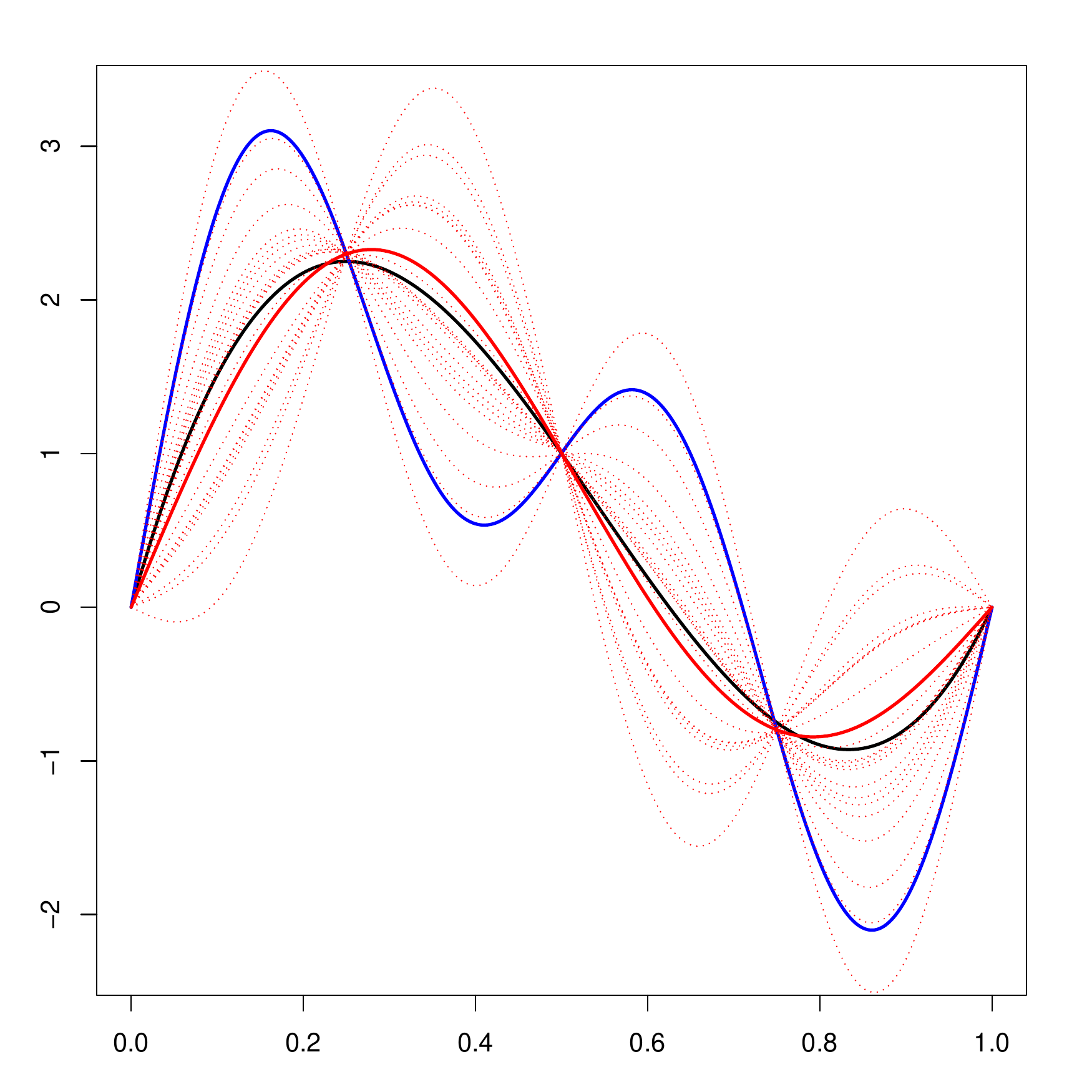}\includegraphics[width=0.5\textwidth]{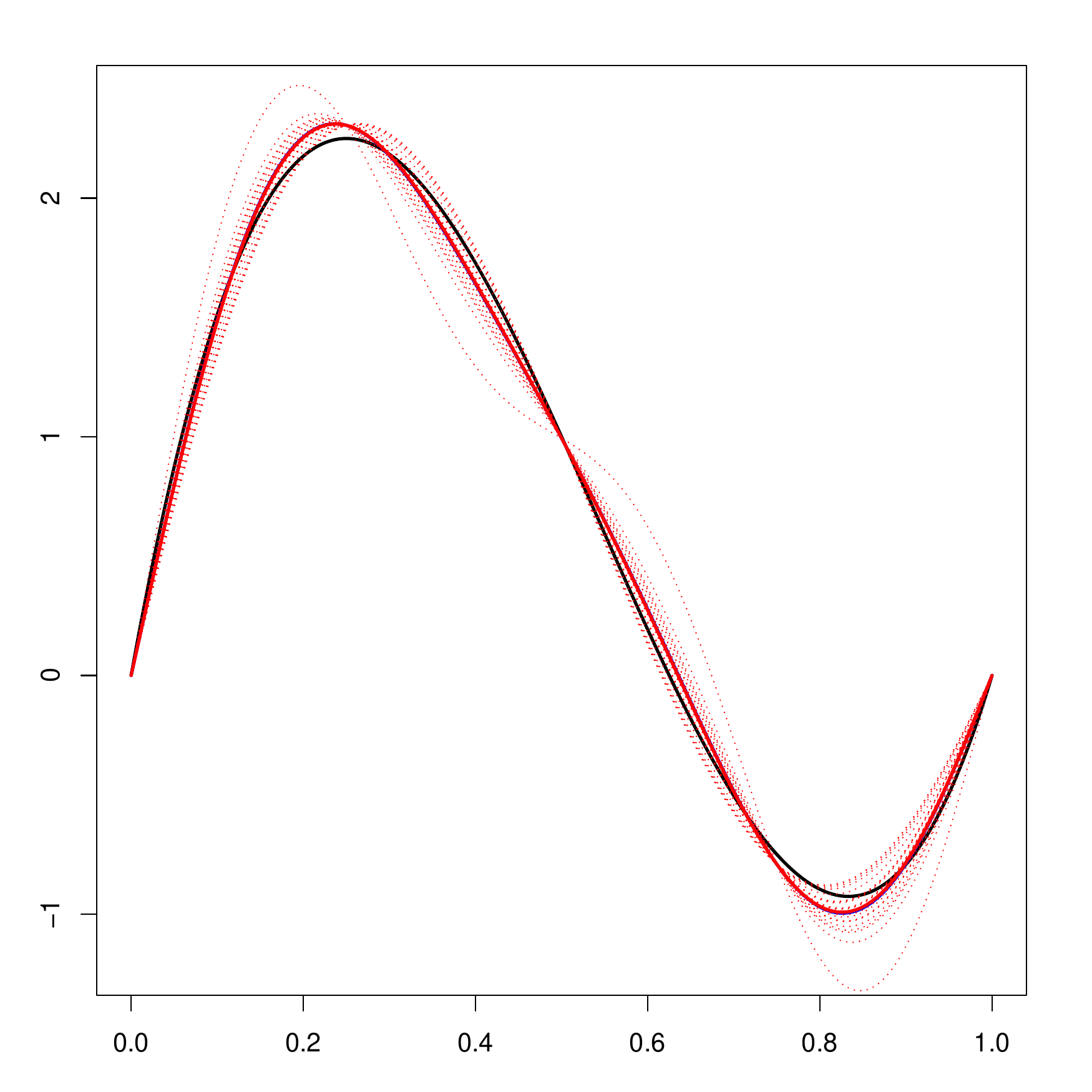}
  \caption{\label{fig:heat}Heat equation with unknown diffusivity parameter:
  True function (black), projection estimator (blue), posterior mean
  (red, solid) and 20 draws from the posterior distribution (red, dotted)
  with $\protect\epsilon=\delta=10^{-6}$ (left) and $10^{-8}$ (right). }
\end{figure}

Figure~\ref{fig:heat} shows the typical posterior mean and 20 draws
from the posterior distribution in a simulation using $\epsilon=\delta=10^{-6}$
and $10^{-8}$. In both cases the projection level is chosen as $J=4\simeq\sqrt{-\log(\epsilon)}$.
Especially for the smaller noise level, the common intersections of
all sampled functions are conspicuous. They reflect a quite low variance
of the posterior distribution in the first coefficients compared to
a relatively large variance already for $f_{4}$ due to the severe
ill-posedness, cf. (\ref{eq:conditionalPosterior}). 

As a reference estimator the Galerkin projector $\hat{f}_{J}$ from
(\ref{eq:estimator}) is plotted, too. We see that for $\epsilon=10^{-6}$
the posterior mean is much closer to the true function indicating an efficiency gain of the Bayesian procedure compared to the projection estimator. 
For $\epsilon=10^{-8}$ both estimators coincide almost perfectly. As shown by the theory, the figure
illustrates that the posterior distribution concentrates around the
truth for smaller noise levels. Monte Carlo simulations based on 500
iterations yield a root mean integrated squared error (RMISE) 0.3353
and 0.0512 for $\epsilon=10^{-6}$ and $\epsilon=10^{-8}$, respectively.
For the posterior mean of $\theta$ we observe a root mean squared
error of approximately $1.0\cdot10^{-6}$ and $9.7\cdot10^{-9}$,
respectively. Additionally, Table~\ref{tab:RMISE} reports the RMISE for several different combinations of the noise levels $\epsilon$ and $\delta$.

\begin{table}[t]\centering
  \begin{tabular}{c|ccc}
    $\delta$ \textbackslash\, $\epsilon$ & $10^{-4}$ & $10^{-6}$ & $10^{-8}$ \\ \hline
    $10^{-4}$ & 0.5728 & 0.3173 & 0.5656\\
    $10^{-6}$ & 0.5515 & 0.3353 & 0.0545\\
    $10^{-8}$ & 0.5548 & 0.3269 & 0.0512\\
  \end{tabular}
  \caption{RMISE for different values of $\epsilon$ and $\delta$.}\label{tab:RMISE}
\end{table}

\subsection{Deconvolution with unknown kernel}

Another example is the deconvolution problem occurring for instance
in image processing, cf. \citet{johnstoneEtAl2004}. The aim is to
recover some unknown 1-periodic function $f$ from the observations
\[
Y=K_{\theta}f+\epsilon Z\quad\text{with}\quad K_{\theta}f:=g_{\theta}\ast f:=\int_{0}^{1}f(\cdot-x)g_{\theta}(x)\d x
\]
where $g_{\theta}\in L^{2}_{per}$ is some $1$-periodic convolution kernel
(more general it might be a signed measure). Since the convolution
operator $K_{\theta}$ is smoothing, the inverse problem is ill-posed.
If the kernel $g_{\theta}$ is unknown, the problem is called blind deconvolution occurring in many applications \cite{BurgerScherzer2001,JustenRamlau2006,StueckEtAl2012}.
In a density estimation setting this problem as already been intensively
investigated, cf. \citep{dattnerEtAl2016,johannes2009, JohannesEtAl2011,neumann1997}
among others. However, the Bayesian perspective on this problem seem
not thoroughly studied. 

We consider the trigonometric basis 
\[
\phi_{0}=1,\quad\phi_{j,0}=\sqrt{2}\sin(2\pi j\cdot),\qquad\phi_{j,1}=\sqrt{2}\cos(2\pi j\cdot),\qquad j\in\N,
\]
with the corresponding approximation spaces $V_{J}=\Span(\phi_{j,l}:j\le J,l\in\{0,1\})$.
Assuming $g_{\theta}$ is symmetric, we have $\langle g_{\theta},\phi_{j,0}\rangle=0$
and
\begin{align*}
K_{\theta}\phi_{0}=\langle g_{\theta},\phi_{0}\rangle\phi_{0},\quad K_{\theta}\phi_{j,l} & =\sum_{m}\langle g_{\theta},\phi_{m,1}\rangle(\phi_{j,l}\ast\phi_{m,1})=\frac{1}{\sqrt{2}}\langle g_{\theta},\phi_{j,1}\rangle\phi_{j,l},\quad j\in\N,l\in\{0,1\}
\end{align*}
 by the angle sum identities (for non-symmetric kernels $K_{\theta}$
could be diagonolised by the complex valued Fourier basis). We thus
obtain the singular value decomposition $K_{\theta}f=\sum_{k}\rho_{\theta,k}f_{k}\phi_{k}$,
again in muli-index notation $k=(j,l),j\in\N,l\in\{0,1\}$, where
$\rho_{\theta,k}=\langle g_{\theta},\phi_{|k|,1}\rangle/\sqrt{2}$
and $f_{k}=\langle f,\phi_{k}\rangle$. Depending on the regularity
of $g$ and thus the decay of $\langle g_{\theta},\phi_{j,1}\rangle$
the problem is mildly or severely ill-posed.

If the convolution kernel is fully unknown, we parametrise $g_{\theta}=\theta$ by all (symmetric) 1-periodic kernels $\theta$. Due to the SVD, we then can identify $g_\theta$ with the singular values, that is, we set $\theta=(\rho_{\theta,k})_k$. The sample $T$ can be understood as training data, where the convolution experiment is applied to all basis functions $f\in\{\phi_{j,l}\}$. In this scenario we obtain $\epsilon=\delta$. 

In our simulation $\theta_0$ is given by the periodic Laplace kernel 
$g_{\theta_0}(x)=\frac{1}{C_h}e^{-|x|/h}\1_{[-1/2,1/2]}(x)$
with normalisation constant $C_h=2h(1+e^{-1/(2h)})$ and fixed bandwidth $h=0.1$. Hence, we have for $k\in\N\times\{0,1\}$
\[
\rho_{\theta_0,0}=1,\qquad\rho_{\theta_0,k}=\frac{2h^{-1}}{C_{h}(4\pi^{2}|k|^{2}+h^{-2})}\big(1-e^{-1/(2h)}\cos(\pi|k|)+e^{-1/(2h)}2\pi|k|h\sin(\pi|k|)\big)
\]
In particular, we have two degree of illposedness. We moreover use $f_{0}$ from (\ref{eq:f0}). 

To implement the empirical Bayes procedure with the trigonometric
basis and corresponding approximation spaces $V_{j}=\Span(\phi_{k}:|k|\le j)$,
we need to replace $2^{j}$ by $2j$ as mentioned in Remark~\ref{rem:FourierBasis}.
Choosing some $b>1$ and setting $\mathcal{J}_{\epsilon}=\Big\lfloor\frac{\log\epsilon^{-1}}{(s_{0}+t+d/2)\log b}\Big\rfloor$
for some lower bound $s_{0}\le s$, the selection rule then reads
as
\[
\hat{J}:=\min\big\{ j\in\{1,b,b^{2},\dots,b^{\mathcal{J}_{\epsilon}}\}:\|\hat{f}_{i}-\hat{f}_{j}\|\le\Delta\epsilon(\log\epsilon^{-1})^{2}i^{3/2}\forall i>j\big\}.
\]

\begin{figure}

\includegraphics[width=0.5\textwidth]{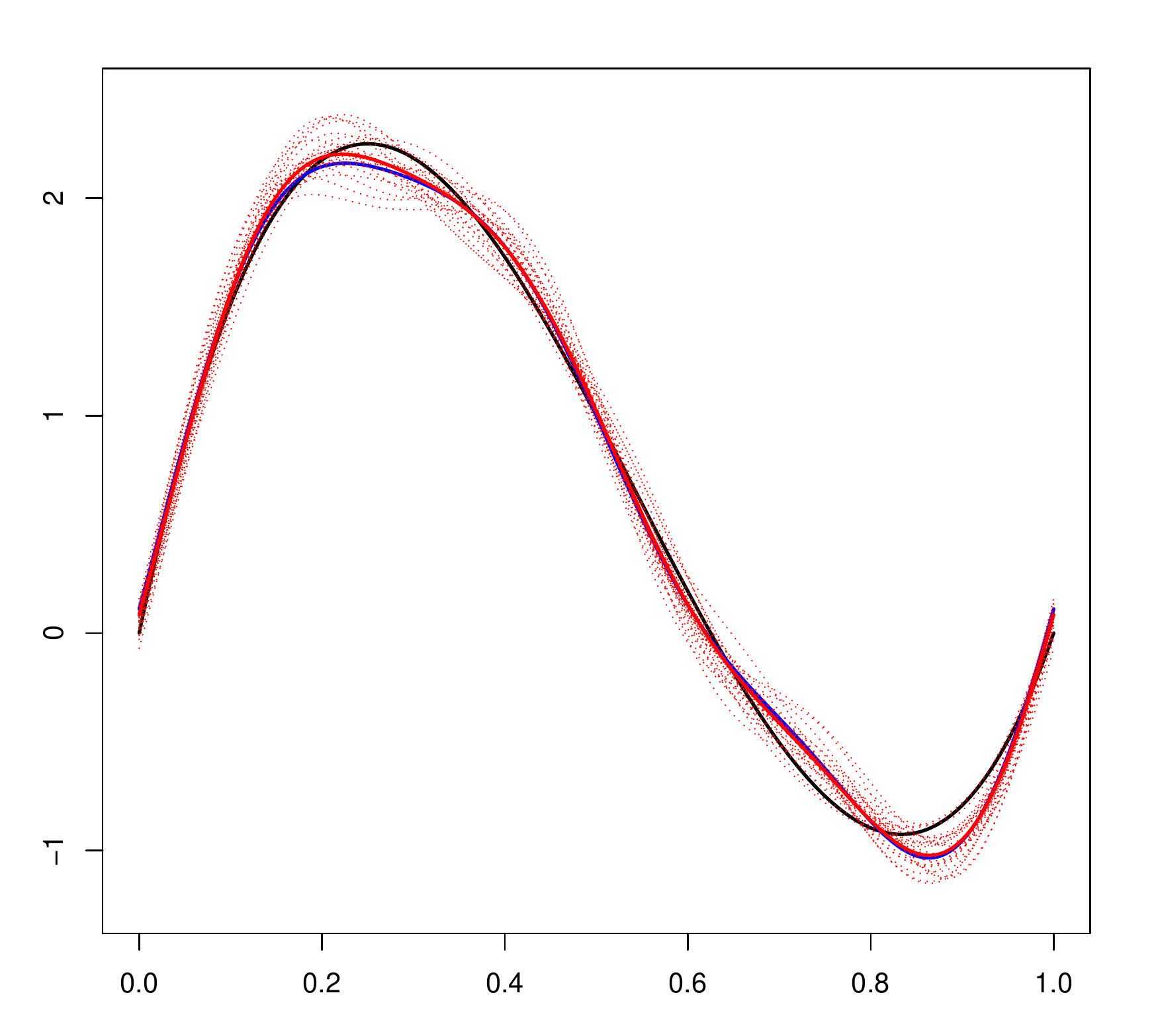}\includegraphics[width=0.5\textwidth]{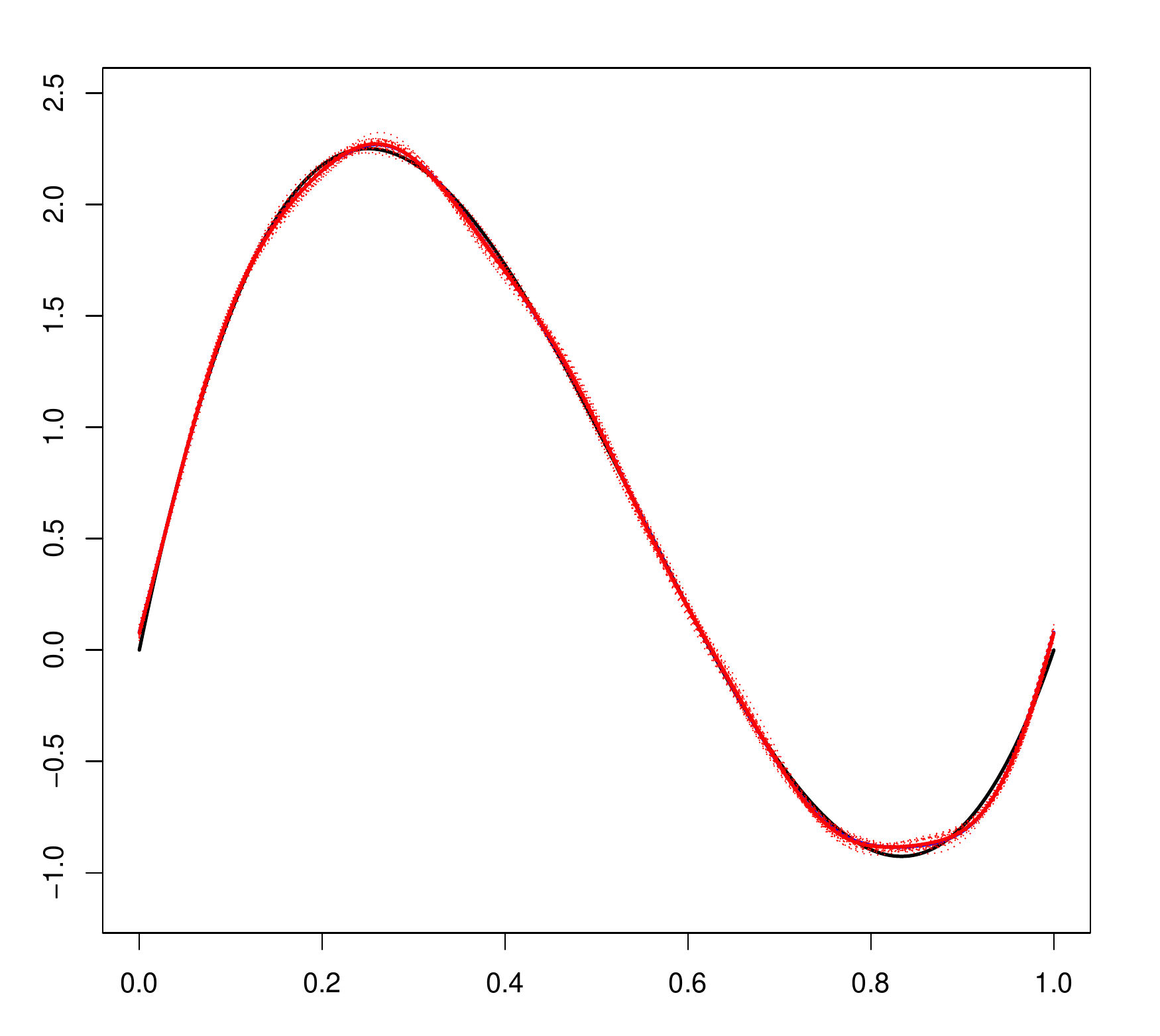}\caption{\label{fig:Deconvolution}Adaptive deconvolution for the Laplace kernel with $\epsilon=\delta=10^{-2}$ (left) and $\epsilon=\delta=10^{-3}$ (right): True function (black), projection estimator (blue), posterior mean (red, solid) and 20 draws from the posterior distribution (red, dotted).}

\end{figure}

Using the again Gaussian product priors for $f$ and $\theta$, the posterior distribution can be similarly approximated as described in Section~\ref{sec:Heat}. However, the nuisance parameter $\theta$ is now infinite dimensional. Here, we can profit from the truncated product structure of the prior which implies that the posterior distribution only depends on the $\hat J$-dimensional projection $P_{\hat J}\theta$ (note that Assumption~\ref{ass:bases} is satisfied with $m=0$). More precisely, we only have to draw from the posterior given by 
\begin{equation*}
\Pi(B|Y,T)=\frac1C\int_{B}\exp\Big(\frac1{\epsilon^2}\langle P_{\hat J}K_{\theta}f,Y\rangle-\frac1{2\epsilon^{2}}\|P_{\hat J}K_{\theta}f\|^{2}+\frac 1{\delta^2}\langle P_{\hat J}\theta,T\rangle-\frac1{2\delta^{2}}\|P_{\hat J}\theta\|^{2}\Big)\d\Pi(f,\theta),
\end{equation*}
with normalisation constant $C>0$ and for all Borel sets $B\subset L^2\times P_{\hat J}\Theta$, cf. proof of Theorem~\ref{thm:contraction}. Therefore, a Gibbs sampler can be used to draw successively the coordinates of $P_{\hat J}\theta$ with a Metropolis-Hastings algorithm and iterate as above with draws of $f$. This simulation approach is not restricted to this particular example, but applies generally. Note that in the specific deconvolution setting, the map $\theta\mapsto K_\theta f$ is linear for fixed $f$, such that $\theta|f,Y,T$ can be directly sampled from a Gaussian distribution. 

For $\epsilon=\delta=10^{-2}$ and $\epsilon=\delta=10^{-3}$ a typical trajectory of the posterior
mean and 20 draws from the posterior are presented in Figure~\ref{fig:Deconvolution}
where the Lepski rule has chosen $J=3$ (i.e. 7 basis functions) and $J=5$ (11 basis functions), respectively.
For the larger noise level, the posterior mean slightly improves the Galerkin projector, while for the smaller noise level both estimators basically coincide.
We see a much better concentration of the posterior distribution than
in the severely ill-posed case discussed previously. In a Monte Carlo simulation for $\epsilon=\delta=10^{-2}$
based on 500 iterations in this setting the posterior mean for $f$
achieved a RMISE of 0.1142 which is approximately $8.6\%$ of $\|f_{0}\|$.
The Lepski method has chosen $J\in\{2,3\}$ with relative frequency
$0.97$. For $\epsilon=\delta=10^{-2}$ the simulation yields a RMISE of 0.0174, which is $1.3\%$ of $\|f_0\|$, and projections levels $J$ in $\{4,5\}$ in $0.82\%$ of the Monte Carlo iterations.

\section{Proofs\label{sec:Proofs}}
We first study some smoothing properties of the operator $K_\theta$.
\begin{lem}
\label{lem:galerkin}Under Assumption~\ref{ass:ill-posed} we have
$\|K_{\theta,j}^{-1}\|_{V_{j}\to V_{j}}\le Q\sigma_{j}^{-1}$ for
all $\theta\in\Theta$.
\end{lem}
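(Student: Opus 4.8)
The plan is to use the standard equivalence between a quadratic-form lower bound on a self-adjoint operator and an upper bound on the inverse of its restriction. First I would observe that $K_{\theta,j} = P_j K_\theta|_{V_j}$ is a self-adjoint operator on the finite-dimensional space $V_j$: for $u,v \in V_j$ we have $\langle K_{\theta,j} u, v\rangle = \langle P_j K_\theta u, v\rangle = \langle K_\theta u, v\rangle = \langle u, K_\theta v\rangle = \langle u, P_j K_\theta v\rangle = \langle u, K_{\theta,j} v\rangle$, using $v = P_j v$ and self-adjointness of $K_\theta$. Hence $K_{\theta,j}$ has a real spectrum and its operator norm on $V_j$ equals the largest absolute value of an eigenvalue; to bound $\|K_{\theta,j}^{-1}\|_{V_j\to V_j}$ it suffices to bound the smallest eigenvalue of $K_{\theta,j}$ from below in absolute value, equivalently to show $|\langle K_{\theta,j} v, v\rangle| \ge Q^{-1}\sigma_j \|v\|^2$ for all $v \in V_j$.

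The key step is the quadratic-form identity: for $v \in V_j$, writing $v = \sum_{|k|\le j}\langle v,\phi_k\rangle \phi_k$, we have $\langle K_{\theta,j} v, v\rangle = \langle P_j K_\theta v, v\rangle = \langle K_\theta v, v\rangle$ since $v \in V_j$. Now Assumption~\ref{ass:ill-posed} applied to $f = v$ gives
\[
Q^{-1}\sum_{|k|\le j}\sigma_{|k|}\langle v,\phi_k\rangle^2 \le \langle K_\theta v, v\rangle \le Q\sum_{|k|\le j}\sigma_{|k|}\langle v,\phi_k\rangle^2,
\]
where the sums only run over $|k|\le j$ because the other coefficients of $v$ vanish. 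Since $(\sigma_j)$ is decreasing, $\sigma_{|k|}\ge\sigma_j$ for $|k|\le j$, so $\sum_{|k|\le j}\sigma_{|k|}\langle v,\phi_k\rangle^2 \ge \sigma_j \|v\|^2$, giving $\langle K_{\theta,j} v, v\rangle \ge Q^{-1}\sigma_j\|v\|^2 > 0$. In particular $K_{\theta,j}$ is positive definite on $V_j$, hence invertible, and for any unit vector $w \in V_j$ the Rayleigh quotient of $K_{\theta,j}^{-1}$ at $w$ equals $1/\langle K_{\theta,j} u, u\rangle$ with $u = K_{\theta,j}^{-1}w/\|K_{\theta,j}^{-1}w\|$, which is at most $Q\sigma_j^{-1}$; taking the supremum over $w$ yields $\|K_{\theta,j}^{-1}\|_{V_j\to V_j} \le Q\sigma_j^{-1}$.

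I do not anticipate a serious obstacle here; the only point requiring a little care is the self-adjointness of the \emph{restricted} operator $K_{\theta,j}$ (as opposed to $K_\theta$ itself), which is what makes the Rayleigh-quotient characterization of the operator norm of the inverse legitimate, and the observation that $\langle P_j K_\theta v,v\rangle = \langle K_\theta v,v\rangle$ for $v\in V_j$ so that Assumption~\ref{ass:ill-posed} can be invoked directly. If one preferred to avoid spectral theory, one could argue purely via the variational bound: for $v\in V_j$, $\|K_{\theta,j} v\|\,\|v\| \ge \langle K_{\theta,j}v,v\rangle \ge Q^{-1}\sigma_j\|v\|^2$, hence $\|K_{\theta,j}v\|\ge Q^{-1}\sigma_j\|v\|$, which shows $K_{\theta,j}$ is injective (hence bijective on the finite-dimensional $V_j$) with $\|K_{\theta,j}^{-1}w\| \le Q\sigma_j^{-1}\|w\|$ for all $w\in V_j$ upon substituting $v = K_{\theta,j}^{-1}w$.
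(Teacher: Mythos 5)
Your proof is correct and, in its essential content, the same as the paper's: the paper also derives the coercivity bound $\langle K_\theta h,h\rangle\ge Q^{-1}\sigma_j\|h\|^2$ for $h\in V_j$ from Assumption~\ref{ass:ill-posed} and the monotonicity of $(\sigma_j)$, and then applies Cauchy--Schwarz (in the form $\langle K_\theta h,h\rangle\le\|h\|\sup_{v\in V_j,\|v\|\le1}\langle g,v\rangle$ with $h=K_{\theta,j}^{-1}g$), which is precisely your closing ``purely variational'' argument. The only blemish is in your spectral detour: the claimed identity that the Rayleigh quotient of $K_{\theta,j}^{-1}$ at $w$ \emph{equals} $1/\langle K_{\theta,j}u,u\rangle$ is in fact only an inequality ($\le$, again by Cauchy--Schwarz), which still yields the bound, and in any case that route is not needed given your elementary version.
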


\begin{proof}
For $g\in V_{j}$ the function $h=K_{\theta,j}^{-1}g\in V_{j}$ is
given by the unique solution to the linear system
\[
\langle K_{\theta}h,v\rangle=\langle g,v\rangle,\qquad\text{for all }v\in V_{j}.
\]
Assumption~\ref{ass:ill-posed} then yields
\begin{align*}
\sigma_{j}\|h\|^{2} & =\sigma_{j}\sum_{|k|\le j}\langle h,\phi_{k}\rangle^{2}\le\sum_{|k|\le j}\sigma_{|k|}\langle h,\phi_{k}\rangle^{2}\\
 & =Q\langle K_\theta h,h\rangle\le Q\|h\|\sup_{v\in V_{j}:\|v\|\le1}\langle K_{\theta}h,v\rangle=Q\|h\|\sup_{v\in V_{j}:\|v\|\le1}\langle g,v\rangle=Q\|h\|\|g\|.
\end{align*}
Therefore, $\sigma_{j}\|K_{\theta,j}^{-1}g\|\le Q\|g\|$ holds true
for all $g\in V_{j}$.
\end{proof}

\begin{rem}\label{rem:Kf2}
  As soon as $(\sigma_j)$ decays at least geometrically, Assumptions~\ref{ass:bases} and \ref{ass:ill-posed} also yield $\|K_\theta f\|^2\lesssim \sum_{k}\sigma_{|k|}^2\langle f,\phi_k\rangle^2$. Indeed, we have for any $f\in L^2$ such that the right-hand side is finite:
  \begin{align*}
    \|K_\theta f\|^2&=\sum_k|\langle K_\theta P_{|k|+m}f,\phi_k\rangle|^2\le\sum_k\|K_\theta^{1/2}P_{|k|+m}f\|^2\|K_\theta^{1/2}\phi_k\|^2\\
    &\lesssim\sum_k\sum_{|l|\le |k|+m}\sigma_{|k|}\sigma_{|l|}\langle f,\phi_l\rangle^2=\sum_l\Big(\sum_{|k|\ge(|l|-m)\vee0}\sigma_{|k|}\Big)\sigma_{|l|}\langle f,\phi_l\rangle^2\lesssim\sum_l\sigma^2_{|l|}\langle f,\phi_l\rangle^2.
  \end{align*}
\end{rem}

\subsection{Proof of Proposition~\ref{prop:concHatF}}

To simplify the notation, we abbreviate $\P=\P_{f,\theta}$ in the
sequel and define the operator $\Delta_{T,j}:=K_{T,j}-K_{\theta,j}$.
Set for $\gamma\in(0,1-Q/\tau)$
\[
\Omega_{T,j}:=\{\|K_{\theta,j}^{-1}\Delta_{T,j}\|_{V_{j}\to V_{j}}\le\gamma\}.
\]
Lemma~\ref{lem:galerkin} yields
\[
\P\big(\Omega_{T,j}^{c}\big)\le\P\big(\|K_{\theta,j}^{-1}\|_{V_{j}\to V_{j}}\|\Delta_{T,j}\|_{V_{j}\to V_{j}}>\gamma\big)\le\P\big(\|\Delta_{T,j}\|_{V_{j}\to V_{j}}>\gamma\sigma_{j}/Q\big).
\]
Under Assumption~\ref{ass:operator} we have due to the condition
$\delta\sigma_{j}^{-1}(\kappa+\sqrt{d_{j}})\le\gamma/(CQL)$ 
\begin{align*}
\P\big(\Omega_{T,j}^{c}\big)\le & \P\big(\|\Delta_{T,j}\|_{V_{j}\to V_{j}}>\gamma\sigma_{j}/Q\big)\\
\le & \P\Big(\|P_{j}W\|_{j}\ge\frac{\gamma\sigma_{j}}{QL}\Big)\le\P\big(\|P_{j}W\|_{V_{j}\to V_{j}}>C\delta(\kappa+\sqrt{d_{j}})\big)\le e^{-c\kappa^{2}}.
\end{align*}
We thus may restrict on $\Omega_{T,j}$ on which the operator $K_{T,j}=K_{\theta,j}(\Id-K_{\theta,j}^{-1}\Delta_{T,j})$
is invertible satisfying
\[
\|K_{T,j}^{-1}\|_{V_{j}\to V_{j}}\le\|(\Id-K_{\theta,j}^{-1}\Delta_{T,j})^{-1}\|_{V_{j}\to V_{j}}\|K_{\theta,j}^{-1}\|_{V_{j}\to V_{j}}\le\frac{1}{1-\gamma}\|K_{\theta,j}^{-1}\|_{V_{j}\to V_{j}}\le\frac{Q}{(1-\gamma)\sigma_{j}}
\]
where we used Lemma~\ref{lem:galerkin} in the last step. Hence,
for $\gamma\le1-Q/\tau$ we have $\Omega_{T,j}\subset\{\|K_{T,j}^{-1}\|_{V_{j}\to V_{j}}\le\tau\sigma_{j}^{-1}\}$.
Therefore, we can decompose on $\Omega_{T,j}$ 
\begin{align}
\|\hat{f}_{j}-f\|^{2} & =\|P_{j}f-f\|^{2}+\|\hat{f}_{j}-P_{j}f\|^{2}\nonumber \\
 & \le\|P_{j}f-f\|^{2}+\|K_{T,j}^{-1}P_{j}Y-P_{j}f\|^{2}.\label{eq:decomp}
\end{align}
The first term is the usual bias. For the second term in (\ref{eq:decomp})
we write on $\Omega_{T,j}$ 
\begin{align*}
K_{T,j}^{-1}P_{j}Y-P_{j}f & =\big((\Id-K_{\theta,j}^{-1}\Delta_{T,j})^{-1}-\Id\big)P_{j}f+\epsilon(\Id-K_{\theta,j}^{-1}\Delta_{T,j})^{-1}K_{\theta,j}^{-1}P_{j}Z\\
 & =(\Id-K_{\theta,j}^{-1}\Delta_{T,j})^{-1}K_{\theta,j}^{-1}\Delta_{T,j}P_{j}f+\epsilon(\Id-K_{\theta,j}^{-1}\Delta_{T,j})^{-1}K_{\theta,j}^{-1}P_{j}Z.
\end{align*}
Since $\|(\Id-K_{\theta,j}^{-1}\Delta_{T,j})^{-1}\|_{V_{j}\to V_{j}}\le1/(1-\gamma)$
on $\Omega_{T,j}$, we obtain
\begin{align}
\|K_{T,j}^{-1}P_{j}Y-P_{j}f\| & \le\frac{1}{1-\gamma}\|K_{\theta,j}^{-1}\|_{V_{j}\to V_{j}}\|\Delta_{T,j}\|_{V_{j}\to V_{j}}\|P_{j}f\|+\frac{\epsilon}{1-\gamma}\|K_{\theta,j}^{-1}\|_{V_{j}\to V_{j}}\|P_{j}Z\|\nonumber \\
 & \le\frac{Q}{(1-\gamma)\sigma_{j}}\big(\|f\|\|\Delta_{T,j}\|_{V_{j}\to V_{j}}+\epsilon\|P_{j}Z\|\big).\label{eq:bound}
\end{align}
To deduce a concentration inequality for $\|P_{j}Z\|$, we proceed
as proposed in \citep{gineNickl2011}: For a countable dense subset
$B$ of the unit ball in $L^{2}$, we have $\|P_{j}Z\|=\sup_{f\in B}\|P_{j}Z(f)\|$.
The Borell-Sudakov-Tsirelson inequality \citep[Thm. 2.5.8]{gineNickl2016}
yields for any $\kappa>0$
\begin{align*}
\P(\|P_{j}Z\|\ge\kappa+\E[\|P_{j}Z\|]) & \le\P\Big(\sup_{f\in B}\|P_{j}Z(f)\|-\E\Big[\sup_{f\in B}\|P_{j}Z(f)\|\Big]\ge\kappa\Big)\le2^{-\kappa^{2}/(2\sigma^{2})}
\end{align*}
with $\sigma^{2}=\sup_{f\in B}\Var(P_{j}Z(f))\le\|f\|^2\le1$. Since
\[
\E[\|P_{j}Z\|]\le\E[\|P_{j}Z\|^{2}]^{1/2}=\Big(\sum_{|k|\le j}\E[Z_{k}^{2}]\Big)^{1/2}=d_{j}^{1/2}
\]
and $d_{j}\lesssim\epsilon^{-2}\kappa^{2}$, we find for some constant
$C>0$
\[
\P\big(\epsilon\|P_{j}Z\|\ge C\kappa\big)\le\P\big(\epsilon\|P_{j}Z\|\ge\kappa+\epsilon d_{j}^{1/2}\big)\le e^{-\kappa^{2}/(2\epsilon^{2})}.
\]
Under Assumption~\ref{ass:operator} and due to $d_{j}\lesssim\delta^{-2}\kappa^{2}$,
we analogously obtain
\[
\P\big(\|\Delta_{T,j}\|_{V_{j}\to V_{j}}\ge C\kappa\big)\le e^{-\kappa^{2}/(2\delta^{2})}.
\]
In combination with (\ref{eq:bound}), the asserted concentration
inequality is proven.\qed

\subsection{Proof of Theorem~\ref{thm:contraction}}

We proof the theorem in two steps.

\emph{Step 1:} We construct tests $\Psi_{n}=\Psi_{n}(Y,T)$ such that
\begin{align}
\E_{f_{0},\theta_{0}}[\Psi_{n}]\to0\qquad\text{and}
\sup_{f\in\mathcal{F}_{n},\theta\in\Theta:\|f-f_{0}\|\ge D\xi_{n}}\E_{f,\theta}[1-\Psi_{n}]\le 3e^{-(C_{1}+4)\kappa_{n}^{2}/(\epsilon_{n}\vee\delta_{n})^{2}}.\label{eq:test}
\end{align}
Based on the estimator $\hat{f}_{j_n}$ from (\ref{eq:estimator}),
we set 
\[
\Psi_{n}:=\1_{\{\|\hat{f}_{j_n}-f_{0}\|\ge D_{1}\xi_{n}\}}
\]
for $D_{1}=2Cc_{2}\sqrt{C_{1}+4}R+2C_{0}\xi_{n}$ with
the constant $C$ from Proposition~\ref{prop:concHatF}. Due to Proposition~\ref{prop:concHatF}
and $\kappa_{n}/\sigma_{j_{n}}\le c_{2}\xi_{n}$, we then have
\begin{align*}
\E_{f_{0},\theta_{0}}[\Psi_{n}] & =\P_{f_{0},\theta_{0}}\Big(\|\hat{f}_{j_n}-f_{0}\|\ge2C\sigma_{j_{n}}^{-1}\sqrt{C_{1}+4}\kappa_{n}\|f_{0}\|+2C_{0}\xi_{n}\Big)\\
 & \le\P_{f_{0},\theta_{0}}\big(\|\hat{f}_{j_n}-f_{0}\|\ge C\sigma_{j_{n}}^{-1}\sqrt{C_{1}+4}\kappa_{n}\|f_{0}\|+\|f_{0}-P_{j_{n}}f_{0}\|\big)\le 3e^{-(C_{1}+4)\kappa_{n}^{2}/(\epsilon_{n}\vee\delta_{n})^{2}}
\end{align*}
converging to 0.

On the alternative we set $D=D_{2}(1+R)$ for $D_{2}=2\max(C_{0}+D_{1},C\sqrt{C_{1}+4}/c_{2})$.
For any $\theta\in\Theta$ and any $f\in\mathcal{F}_{n}$ with $\|f-f_{0}\|\ge D_{2}(1+R)\xi_{n}$
we have $(2-D_{2}\xi_{n})\|f-f_{0}\|\ge D_{2}(1+R)\xi_{n}$ for sufficiently
small $\xi_{n}\downarrow0$. Therefore,
\begin{align}
\|f-f_{0}\|\ge & \frac{D_{2}}{2}(1+R+\|f-f_{0}\|)\xi_{n}\ge\frac{D_{2}}{2}\big(1+\|f_{0}\|+\|f-f_{0}\|\big)\xi_{n}\nonumber \\
\ge & \frac{D_{2}}{2}(1+\|f\|)\xi_{n}\ge C\sigma_{j_{n}}^{-1}\sqrt{C_{1}+4}\kappa_{n}\|f\|+(C_{0}+D_{1})\xi_{n},\label{eq:alternative}
\end{align}
where the last inequality holds by the choice of $D_{2}$.
We obtain
\begin{align*}
\E_{f,\theta}[1-\Psi_{n}] & =\P_{f,\theta}\big(\|\hat{f}_{j_n}-f_{0}\|<D_{1}\xi_{n}\big)\\
 & \le\P_{f,\theta}\big(\|\hat{f}_{j_n}-f\|>\|f-f_{0}\|-D_{1}\xi_{n}\big)\\
 & \le\P_{f,\theta}\big(\|\hat{f}_{j_n}-f\|>C\sigma_{j_{n}}^{-1}\sqrt{C_{1}+4}\kappa_{n}\|f\|+C_{0}\xi_{n}\big).
\end{align*}
Proposition~\ref{prop:concHatF} yields again $\E_{f,\theta}[1-\Psi_{n}]\le 3e^{-(C_{1}+4)\kappa_{n}^{2}/(\epsilon_{n}\vee\delta_{n})^{2}}$.

\emph{Step 2:} Since $\E_{f_{0},\theta_{0}}[\Psi_{n}]\to0$, it suffices
to prove that 
\begin{align*}
\Pi_{n}\big(f & \in V_{j_n}:\|f-f_{0}\|>D\xi_{n}|Y,T\big)(1-\Psi_{n})\\
= & \frac{\int_{f\in V_{j_n}:\|f-f_{0}\|>D\xi_{n},\theta\in\Theta}p_{f,\theta}(Z,W)\d\Pi_{n}(f,\theta)(1-\Psi_{n})}{\int_{f\in\mathcal{F},\theta\in\Theta}p_{f,\theta}(Z,W)\d\Pi_{n}(f,\theta)}\to0\quad\text{in }P_{f_{0},\theta_{0}}\text{-probability.}
\end{align*}
Due to Assumption~\ref{ass:bases}, we have $K_{\theta}P_{j_n}=P_{j_n+m}K_{\theta}P_{j_n}=K_{\theta,j_n+m}P_{j_n}$.
Hence, restricted on $f\in V_{j_n}$, we obtain
\begin{align*}
p_{f,\theta}(z,w) & =\exp\Big(\frac{1}{\epsilon}\langle K_{\theta,j_n+m}f-K_{\theta_{0}}f_{0},z\rangle-\frac{1}{2\epsilon^{2}}\|K_{\theta,j_n+m}f-K_{\theta_{0}}f_{0}\|^{2}\\
 & \hspace{5em}+\frac{1}{\delta}\langle\theta-\theta_{0},w\rangle-\frac{1}{2\delta^{2}}\|\theta-\theta_{0}\|^{2}\Big)\\
 & =\exp\Big(\frac{1}{\epsilon}\langle K_{\theta,j_n+m}f-K_{\theta_{0}}f_{0},z\rangle-\frac{1}{2\epsilon^{2}}\|K_{\theta,j_n+m}f-P_{j_n+m}K_{\theta_{0}}f_{0}\|^{2}\\
 & \hspace{5em}-\frac{1}{2\epsilon^{2}}\|(\Id-P_{j_n+m})K_{\theta_{0}}f_{0}\|^{2}+\frac{1}{\delta}\langle\theta-\theta_{0},w\rangle-\frac{1}{2\delta^{2}}\|\theta-\theta_{0}\|^{2}\Big).
\end{align*}
Since we assume that $K_{\theta,j_n+m}$ depends only on $P_{j_n+m}\theta=(\theta_{1},\dots,\theta_{l_{j_n+m}})$
and $\Pi$ is a product prior in $(\theta_{k})$, we may rewrite
\begin{align}
\Pi_{n}\big(f & \in V_{j_n}:\|f-f_{0}\|>D\xi_{n}|Y,T\big)(1-\Psi_{n})\label{eq:ratio}\\
 & \qquad\le\frac{\int_{f\in\mathcal{F}\cap V_{j_n}:\|f-f_{0}\|>D\xi_{n},\theta\in\Theta}p_{f,\theta}^{(j_n)}(Z,W)\d\Pi_{n}(f,\theta)(1-\Psi_{n})}{\int_{f\in\mathcal{F}\cap V_{j_n},\theta\in\Theta}p_{f,\theta}^{(j_n)}(Z,W)\d\Pi_{n}(f,\theta)}\nonumber 
\end{align}
with
\begin{align*}
p_{f,\theta}^{(j_n)}(z,w) & =\exp\Big(\frac{1}{\epsilon}\langle P_{j_n+m}(K_{\theta}f-K_{\theta_{0}}f_{0}),z\rangle-\frac{1}{2\epsilon^{2}}\|P_{j_n+m}(K_{\theta}f-K_{\theta_{0}}f_{0})\|^{2}\\
 & \hspace{5em}+\frac{1}{\delta}\langle P_{j_n+m}(\theta-\theta_{0}),w\rangle-\frac{1}{2\delta^{2}}\|P_{j_n+m}(\theta-\theta_{0})\|^{2}\Big).
\end{align*}

We can proceed as in the proof of the Theorems 7.3.1 and 7.3.5, respectively, in
\citep{gineNickl2016}. First we need a lower bound for the denominator in \eqref{eq:ratio}. Defining the event 
\[
  B_n:=\Big\{(f,\theta)\in V_{j_{n}}\times\Theta:\frac{\|P_{j_{n}+m}(K_{\theta}f-K_{\theta_{0}}f_{0})\|^{2}}{\epsilon_{n}^{2}}+\frac{\|P_{j_{n}+m}(\theta-\theta_{0})\|^{2}}{\delta_{n}^{2}}\le\frac{\kappa_{n}^{2}}{(\epsilon_{n}\vee\delta_{n})^{2}}\Big\},
\]
we obtain
\begin{align*}
  &\P_{f_0,\theta_0}\Big(\int_{f\in\mathcal{F}\cap V_{j_n},\theta\in\Theta}p_{f,\theta}^{(j_n)}(Z,W)\d\Pi_{n}(f,\theta)\ge e^{-(C_1+2)\kappa_n^2/(\epsilon_n\vee\delta_n)^2}\Big)\\
  &\qquad\ge \P_{f_0,\theta_0}\Big(\int_{B_n}p_{f,\theta}^{(j_n)}(Z,W)\frac{\d\Pi_{n}(f,\theta)}{\Pi_n(B_n)}\ge e^{-2\kappa_n^2/(\epsilon_n\vee\delta_n)^2}\Big)
  \ge 1-\frac{(\epsilon_n\vee\delta_n)^{2}}{\kappa_n^2},
\end{align*}
where the first inequality is due to the small ball probability (\ref{eq:smallBall}) and the second inequality follows along the lines of Lemma 7.3.4 in \citep{gineNickl2016}. Using this bound for the denominator together with Markov's inequality and Fubini's theorem, the probability that \eqref{eq:ratio} is larger than some $r>0$ is bounded by
\begin{align*}
  &\P_{f_{0},\theta_{0}}\Big(\Pi_{n}(f\in V_{j_n}:\|f-f_{0}\|>D\xi_{n}|Y,T)(1-\Psi_n)\ge r\Big)\\
  &\qquad\le \P_{f_{0},\theta_{0}}\Big(e^{(C_1+2)\kappa_n^2/(\epsilon_n\vee\delta_n)^2}(1-\Psi_n)\int_{f,\theta:\|f-f_{0}\|>D\xi_{n}}p_{f,\theta}^{(j_n)}(Z,W)\d\Pi_{n}(f,\theta)\ge r\Big)+\frac{(\epsilon_n\vee\delta_n)^{2}}{\kappa_n^2}\\
  &\qquad\le \frac{e^{(C_1+2)\kappa_n^2/(\epsilon_n\vee\delta_n)^2}}r\E_{f_0,\theta_0}\Big[(1-\Psi_n)\int_{f,\theta:\|f-f_{0}\|>D\xi_{n}}p_{f,\theta}^{(j_n)}(Z,W)\d\Pi_{n}(f,\theta)\Big]+\frac{(\epsilon_n\vee\delta_n)^{2}}{\kappa_n^2}\\
  &\qquad\le \frac{e^{(C_1+2)\kappa_n^2/(\epsilon_n\vee\delta_n)^2}}r\int_{f,\theta:\|f-f_{0}\|>D\xi_{n}}\E_{f_0,\theta_0}\Big[(1-\Psi_n)p_{f,\theta}^{(j_n)}(Z,W)\Big]\d\Pi_{n}(f,\theta)+\frac{(\epsilon_n\vee\delta_n)^{2}}{\kappa_n^2}.
\end{align*}
Note that $p_{f,\theta}^{(j_n)}$ corresponds to the density of the law of $(Y',T')$ where 
$$Y'=P_{j_n+m}K_{\theta}f+(\Id-P_{j_n+m})K_{\theta_{0}}f_{0}+\epsilon Z\quad\text{and}\quad T'=P_{j_n+m}\theta+(\Id-P_{j_n+m})\theta_{0}+\delta W$$ with respect to $\P_{\theta_{0},f_{0}}^{Y}\otimes\P_{\theta_{0}}^{T}$ and we have $\Psi_{n}(Y,T)=\Psi_{n}(Y',T')$ by construction. Therefore, we can apply Step~1 to bound the previous display and conclude
\begin{equation}
\P_{f_{0},\theta_{0}}\Big(\Pi_{n}(f\in V_{j_n}:\|f-f_{0}\|>D\xi_{n}|Y,T)\ge r\Big)\lesssim\frac{1}{r}e^{-2\kappa_n^2/(\epsilon_n\vee\delta_n)^2}+\frac{(\epsilon_n\vee\delta_n)^{2}}{\kappa_n^2}+\E_{f_0,\theta_0}[\Psi_n].\label{eq:boundRate}
\end{equation}
It remains to note that for any $r>0$ the right-hand side converges to zero as $n\to\infty$ .  \qed

\subsection{Proof of Theorem~\ref{thm:mildly}}

For the sake of brevity we omit the subscript $n$ in the proof. $c_{1},c_{2},\dots$
will denote positive, universal constants. We will choose $\kappa,\xi$
and $j=J$ according to
\begin{equation}
\xi\simeq\Big((\epsilon\vee\delta)\log\frac{1}{\epsilon\vee\delta}\Big)^{2s/(2s+2t+d)},\kappa\simeq\Big((\epsilon\vee\delta)\log\frac{1}{\epsilon\vee\delta}\Big)^{2(s+t)/(2s+2t+d)},2^{j}=\kappa^{-1/(s+t)}.\label{eq:choices}
\end{equation}
It is not difficult to see that these choices satisfy the requirements
of Theorem~\ref{thm:contraction} and $\|f_{0}-P_{j}f_{0}\|\lesssim\xi$
holds by (\ref{eq:approx}). Moreover, the support of $\Pi_{f}$ lies
in $V_{j}$ such that (\ref{eq:smallBias}) is trivially satisfied
for $\F_{n}=\{f:\|f-P_{j}f\|\le C_{0}\xi\}$. It only remains to verify
the small ball probability (\ref{eq:smallBall}). 

Owing to $P_{j}K_{\theta}=P_{j}K_{\theta}P_{j+m}=P_{j}K_{\theta,j+m}$,
(\ref{eq:approx}) and $\|K_{\theta}f\|\lesssim\|f\|_{H^{-t}}$, we
can estimate for any $f\in V_{j}$
\begin{align*}
\|P_{j+m}(K_{\theta}f-K_{\theta_{0}}f_{0})\|\le & \|P_{j+m}(K_{\theta}-K_{\theta_{0}})f_{0}\|+\|P_{j+m}K_{\theta,j+2m}(f-f_{0})\|\\
\le & \|(K_{\theta,j+2m}-K_{\theta_{0},j+2m})f_{0}\|+\|K_{\theta}P_{j+2m}(f-f_{0})\|\\
\le & R\|K_{\theta,j+2m}-K_{\theta_{0},j+2m}\|_{V_{j+2m}\to V_{j+2m}}+\|f-P_{j+2m}f_{0}\|_{H^{-t}}\\
\le & R\|K_{\theta,j+2m}-K_{\theta_{0},j+2m}\|_{V_{j+2m}\to V_{j+2m}}+\|f-P_{j}f_{0}\|_{H^{-t}}\\
 & \quad+\|P_{j+2m}(\Id-P_{j})f_{0}\|_{H^{-t}}.
\end{align*}
The last term is bounded by $\|(\Id-P_{j})f_{0}\|_{H^{-t}}\lesssim2^{-j(s+t)}\|f_{0}\|_{H^{s}}$
being of the order $\mathcal{O}(\kappa\epsilon/(\epsilon\vee\delta))$
due to $\epsilon\gtrsim\delta$ and the choice $j$ as in (\ref{eq:choices}).
We obtain 
\begin{align}
 & \Pi\big(f\in\mathcal{F}\cap V_{j},\theta\in\Theta:\epsilon^{-2}\|P_{j+m}(K_{\theta}f-K_{\theta_{0}}f_{0})\|^{2}+\delta^{-2}\|P_{j+m}(\theta-\theta_{0})\|^{2}\le\kappa^{2}/(\epsilon\vee\delta)^{2}\big)\nonumber \\
 & \quad\ge\Pi\Big(f\in\mathcal{F}\cap V_{j},\theta\in\Theta:\frac{1}{\epsilon^{2}}\|f-P_{j}f_{0}\|_{H^{-t}}^{2}+\frac{R^{2}}{\epsilon^{2}}\|K_{\theta,j+2m}-K_{\theta_{0},j+2m}\|_{V_{j+2m}\to V_{j+2m}}^{2}\nonumber \\
 & \quad\qquad\qquad+\frac{1}{\delta^{2}}\|P_{j+m}(\theta-\theta_{0})\|^{2}\le c_{1}\kappa^{2}/(\epsilon\vee\delta)^{2}\Big)\nonumber \\
 & \quad\ge\Pi_{f}\Big(f\in\mathcal{F}\cap V_{j}:\|f-P_{j}f_{0}\|_{H^{-t}}\le\frac{c_{1}\epsilon\kappa}{\sqrt{2}(\epsilon\vee\delta)}\Big)\nonumber \\
 & \quad\qquad\times\Pi_{\theta}\Big(\frac{1}{\epsilon^{2}}\|K_{\theta,j+2m}-K_{\theta_{0},j+2m}\|_{V_{j+2m}\to V_{j+2m}}^{2}+\frac{1}{\delta^{2}}\|P_{j+m}(\theta-\theta_{0})\|^{2}\le\frac{c_{1}\kappa^{2}}{2(\epsilon\vee\delta)^{2}}\Big),\label{eq:inProofRates}
\end{align}
where the last line follows from independence of $f$ and $\theta$
under $\Pi$. The first term can be bounded using the product structure
and the estimate (\ref{eq:raysBound}). Setting $\tilde{\kappa}=\frac{\epsilon\kappa}{\epsilon\vee\delta}$
and taking $\log\tau_{j}\lesssim j$ into account, we obtain
\begin{align*}
 & \Pi_{f}\Big(f\in\mathcal{F}\cap V_{j}:\|f-P_{j}f_{0}\|_{H^{-t}}\le\frac{c_{1}\epsilon\kappa}{\sqrt{2}(\epsilon\vee\delta)}\Big)\\
 & \quad=\Pi_{f}\Big(f\in\mathcal{F}\cap V_{j}:\sum_{|k|\le j}2^{-2t|k|}(f_{k}-f_{0,k})^{2}\le\frac{c_{1}^2\tilde{\kappa}^{2}}{2}\Big)\\
 & \quad\ge\prod_{|k|\le j}\Pi_{f}\big(|f_{k}-f_{0,k}|\le c_{2}\tilde{\kappa}2^{(t-d/2)|k|}\big)\\
 & \quad\ge\exp\Big(c_{3}2^{jd}\log(2\Gamma)+c_{3}\sum_{|k|\le j}\big((2t-d)|k|-\log\tau_{|k|}+\log\tilde{\kappa}\big)-2\gamma\sum_{|k|\le j}\tau_{|k|}^{-2}\big(|f_{0,k}|^{2}+c_{3}\tilde{\kappa}^{2}2^{(2t-d)|k|}\big)\Big)\\
 & \quad\ge\exp\Big(c_{4}2^{jd}(\log\tilde{\kappa}-j)-2\gamma\max_{l\le j}(2^{-2sl}\tau_{l}^{-2})\|f_{0}\|_{H^{s}}^{2}-c_{4}\tilde{\kappa}^{2}\tau_{j}^{-2}2^{2tj}\Big).
\end{align*}
Since $\kappa\simeq2^{-j(s+t)}$, we have $\log\tilde{\kappa}^{-1}\lesssim j+\log\frac{\epsilon\vee\delta}{\epsilon}\lesssim j$.
From the assumptions on $\tau_{j}$ we thus deduce
\begin{equation}
\Pi_{f}\Big(f\in\mathcal{F}\cap V_{j}:\|f-P_{j}f_{0}\|_{H^{-t}}\le\frac{c_{1}\epsilon\kappa}{\sqrt{2}(\epsilon\vee\delta)}\Big)\ge e^{c_{5}2^{jd}(\log\tilde{\kappa}-j)}\ge e^{-c_{6}j2^{jd}}.\label{eq:inProofRates2}
\end{equation}
By the the Lipschitz continuity $\|K_{\theta,j}-K_{\theta_{0},j}\|_{V_{j}\to V_{j}}^{2}\lesssim\|P_{j}(\theta-\theta_{0})\|^{2}$,
the second term in (\ref{eq:inProofRates}) is bounded by
\[
\Pi_{\theta}\Big(\Big(\frac{1}{\epsilon^{2}}+\frac{1}{\delta^{2}}\Big)\|P_{j+2m}(\theta-\theta_{0})\|^{2}\le\frac{c_{7}\kappa^{2}}{(\epsilon\vee\delta)^{2}}\Big)\ge\Pi_{\theta}\Big(\|P_{j+m}(\theta-\theta_{0})\|\le\frac{c_{7}(\epsilon\wedge\delta)\kappa}{\epsilon\vee\delta}\Big).
\]
Due to Assumption~\ref{ass:densities} and using again (\ref{eq:raysBound}),
we can estimate for $\bar{\kappa}=\frac{(\epsilon\wedge\delta)\kappa}{\epsilon\vee\delta}$:
\begin{align*}
 & \Pi_{\theta}\Big(\|P_{j+2m}(\theta-\theta_{0})\|\le c_{7}\bar{\kappa}\Big)\\
 & \quad\ge\prod_{|k|\le l_{j+m}}\Pi_{\theta}\big(|\theta_{k}-\theta_{k,0}|\le c_{7}\bar{\kappa}/\sqrt{l_{j+2m}}\big)\\
 & \quad\ge\exp\Big(c_{8}l_{j+2m}\log(2\Gamma)+c_{8}l_{j+2m}\log\bar{\kappa}-\frac{c_{8}}{2}l_{j+2m}\log l_{j+2m}-\gamma\sum_{|k|\le l_{j+2m}}\big(|\theta_{0,k}|+c_{7}\bar{\kappa}/\sqrt{l_{j+2m}}\big)^{2}\Big)\\
 & \quad\ge\exp\Big(c_{9}l_{j+2m}\log\bar{\kappa}-c_{9}l_{j+2m}\log l_{j+2m}-c_{9}\|\theta_{0}\|^{2}-c_{9}\bar{\kappa}^{2}\Big)\\
 & \quad\ge\exp\Big(-c_{10}l_{j+2m}\big(\log(\bar{\kappa}^{-1})+\log l_{j+2m}\big)\Big),
\end{align*}
where we have used in the last step that $\bar{\kappa}\le\kappa\to0$.
Because $\epsilon^{\eta}\lesssim\delta$ implies $\log\bar{\kappa}^{-1}\lesssim j+\log\frac{\epsilon\vee\delta}{\epsilon\wedge\delta}\lesssim j$,
we find in combination with $\log l_{j+2m}\lesssim j$ that
\begin{equation}
\Pi_{\theta}\Big(\|P_{j+m}(\theta-\theta_{0})\|\le c_{7}\bar{\kappa}\Big)\ge e^{-c_{11}jl_{j+2m}}\gtrsim e^{-c_{12}j2^{jd}}.\label{eq:inProofRates3}
\end{equation}
Therefore, (\ref{eq:smallBall}) follows from $j2^{jd}\le\kappa^{2}(\epsilon\vee\delta)^{-2}$,
which is satisfied due to (\ref{eq:choices}), in combination with
(\ref{eq:inProofRates}), (\ref{eq:inProofRates2}) and (\ref{eq:inProofRates3}).\qed

\subsection{Proof of Theorem~\ref{thm:severely}}

The proof is similar to the previous one. The choices of $\kappa,\xi$
and $j$ given by
\begin{equation}
\xi\simeq\Big(\log\frac{1}{\epsilon\vee\delta}\Big)^{-s/t},\qquad\kappa\simeq(\epsilon\vee\delta)^{1/2},\qquad2^{j}=\Big(-\frac{1}{2r}\log\big(\frac{\kappa\epsilon}{\epsilon\vee\delta}\big)\Big)^{1/t}\label{eq:choices-1}
\end{equation}
satisfy the conditions of Theorem~\ref{thm:contraction}. Especially,
we have $\|f_{0}-P_{j}f_{0}\|\lesssim2^{-js}\|f_{0}\|_{H_{s}}\lesssim\xi$
and $2^{J}=2^{j}(1+o(1))$ because of $\log\epsilon\simeq\log\delta$.
Since $\|K_{\theta}f\|^2\lesssim\sum_{k}e^{-2r2^{|k|t}}f_{k}^{2}$,
we estimate for any $f\in\mathcal{F}\cap V_{j}$
\begin{align}
 & \|P_{j+m}(K_{\theta}f-K_{\theta_{0}}f_{0})\|^{2}\nonumber \\
 & \quad\le2\|(K_{\theta,j+2m}-K_{\theta_{0},j+2m})f_{0}\|^{2}+2\|K_{\theta}P_{j+2m}(f-f_{0})\|^{2}\nonumber \\
 & \quad\lesssim2\|K_{\theta,j+2m}-K_{\theta_{0},j+2m}\|_{V_{j+2m}\to V_{j+2m}}^{2}+\sum_{k}e^{-2r2^{|k|t}}\langle f-P_{j+2m}f_{0},\phi_{k}\rangle^{2}\nonumber \\
 & \quad\le2\|K_{\theta,j+2m}-K_{\theta_{0},j+2m}\|_{V_{j+2m}\to V_{j+2m}}+4\sum_{|k|\le j}e^{-2r2^{|k|t}}|f_{k}-f_{0,k}|^{2}+4\sum_{|k|>j}e^{-2r2^{|k|t}}f_{0,k}^{2}.\label{eq:biasSeverely}
\end{align}
Using
\[
\sum_{|k|>j}e^{-2r2^{|k|t}}f_{0,k}^{2}\le2^{-2js}e^{-2r2^{jt}}\|f\|_{H^{s}}^{2}\lesssim e^{-2r2^{jt}},
\]
together with the choice $j$ from (\ref{eq:choices-1}), the last
term in (\ref{eq:biasSeverely}) is $\mathcal{O}(\kappa\epsilon/(\epsilon\vee\delta))$.
Analogously to (\ref{eq:inProofRates}) we obtain for some $c_{1}>0$
\begin{align}
 & \Pi\Big(f\in\mathcal{F}\cap V_{j},\theta\in\Theta:\frac{1}{\epsilon^{2}}\|P_{j+m}(K_{\theta}f-K_{\theta_{0}}f_{0})\|^{2}+\frac{1}{\delta^{2}}\|P_{j+m}(\theta-\theta_{0})\|^{2}\le\frac{\kappa^{2}}{(\epsilon\vee\delta)^{2}}\Big)\nonumber \\
 & \qquad\ge\Pi_{f}\Big(f\in\mathcal{F}\cap V_{j}:\sum_{|k|\le j}e^{-2r2^{|k|t}}|f_{k}-f_{0,k}|^{2}\le c_{1}\frac{\epsilon\kappa}{(\epsilon\vee\delta)}\Big)\nonumber \\
 & \qquad\qquad\times\Pi_{\theta}\Big(\frac{1}{\epsilon^{2}}\|K_{\theta,j+2m}-K_{\theta_{0},j+2m}\|_{V_{j+2m}\to V_{j+2m}}+\frac{1}{\delta^{2}}\|P_{j+m}(\theta-\theta_{0})\|^{2}\le c_{1}\frac{\kappa^{2}}{(\epsilon\vee\delta)^{2}}\Big).\label{eq:inProofRates-1}
\end{align}
The second factor is the same as in the proof of Theorem~\ref{thm:mildly}.
Taking into account that $\log\delta\simeq\log\epsilon$ and (\ref{eq:choices-1})
imply $-\log\frac{(\epsilon\wedge\delta)\kappa}{\epsilon\vee\delta}=-\log\kappa-\log\frac{(\epsilon\wedge\delta)}{\epsilon\vee\delta}\simeq2^{jt}$,
we find
\begin{align}
 & \Pi_{\theta}\Big(\frac{1}{\epsilon^{2}}\|K_{\theta,j+2m}-K_{\theta_{0},j+2m}\|_{V_{j+2m}\to V_{j+2m}}+\frac{1}{\delta^{2}}\|P_{j+m}(\theta-\theta_{0})\|^{2}\le c_{1}\frac{\kappa^{2}}{(\epsilon\vee\delta)^{2}}\Big)\nonumber \\
 & \qquad\ge\Pi_{\theta}\Big(\|P_{j+2m}(\theta-\theta_{0})\|\le\frac{c_{2}(\epsilon\wedge\delta)\kappa}{\epsilon\vee\delta}\Big)\ge e^{-c2^{jt}l_{j+2m}}.\label{eq:inProofRates1-1}
\end{align}
 Setting $\tilde{\kappa}=\frac{\epsilon\kappa}{\epsilon\vee\delta}$
and applying (\ref{eq:raysBound}), we obtain for the first term 
\begin{align*}
 & \Pi_{f}\Big(f\in\mathcal{F}\cap V_{j}:\sum_{|k|\le j}e^{-2r2^{|k|t}}\langle f-f_{0},\phi_{k}\rangle^{2}\le\frac{c_{1}\epsilon\kappa}{(\epsilon\vee\delta)}\Big)\\
 & \quad\ge\prod_{|k|\le j}\Pi_{f}\big(|f_{k}-f_{0,k}|\le c_{3}\tilde{\kappa}2^{-d|k|/2}e^{r2^{|k|t}}\big)\\
 & \quad\ge\exp\Big(c_{4}2^{jd}\log(2\Gamma)+c_{4}\sum_{|k|\le j}\big(r2^{|k|t}-d|k|-\log\tau_{|k|}+\log\tilde{\kappa}\big)\\
 &\qquad\qquad-2\gamma\sum_{|k|\le j}\tau_{|k|}^{-2}\big(|f_{0,k}|^{2}+c_{3}\tilde{\kappa}^{2}2^{-dj}e^{2r2^{|k|t}}\big)\Big)\\
 & \quad\ge\exp\Big(c_{5}2^{jd}(\log\tilde{\kappa}-2^{jt})-c_{5}\max_{l\le j}(2^{-2sl}\tau_{l}^{-2})\|f_{0}\|_{H^{s}}^{2}-c_{5}\tilde{\kappa}^{2}\tau_{j}^{-2}e^{2r2^{jt}}\Big).
\end{align*}
From the assumptions on $\tau_{j}$ and $-\log\tilde{\kappa}\lesssim2^{jt}$
we thus deduce
\begin{equation}
\Pi_{f}\Big(f\in\mathcal{F}\cap V_{j}:\sum_{|k|\le j}e^{-r2^{|k|t}}\langle f-f_{0},\phi_{k}\rangle^{2}\le\frac{c_{1}\epsilon\kappa}{(\epsilon\vee\delta)}\Big)\ge e^{c_{6}2^{jd}(\log\tilde{\kappa}-2^{jt})}\ge e^{-c_{7}2^{j(d+t)}}.\label{eq:inProofRates2-1}
\end{equation}
Combining (\ref{eq:inProofRates1-1}) and (\ref{eq:inProofRates2-1})
yields
\[
\Pi\Big(f\in\mathcal{F}\cap V_{j},\theta\in\Theta:\frac{1}{\epsilon^{2}}\|P_{j+m}(K_{\theta}f-K_{\theta_{0}}f_{0})\|^{2}+\frac{1}{\delta^{2}}\|P_{j+m}(\theta-\theta_{0})\|^{2}\le\frac{\kappa^{2}}{(\epsilon\vee\delta)^{2}}\Big)\ge e^{-c_{8}2^{jt}(2^{jd}+l_{j+2m})}.
\]
Therefore, (\ref{eq:smallBall}) follows from $2^{j(2d+t)}\simeq\log(\kappa^{-1})^{(2d+t)/t}\le\kappa^{2}(\epsilon\vee\delta)^{-2}$
by the choice of $\kappa$ from (\ref{eq:choices-1}).\qed

\subsection{Proof of Theorem~\ref{thm:adaptive}}

Let us introduce the oracle which balances the bias and the variance
term:
\[
J_{o}:=\min\big\{ j\le\mathcal{J}_{\epsilon}:R2^{-js}\le CR\log(1/\epsilon)\epsilon2^{j(t+d/2)}\big\}
\]
where $C$ is the constant from Proposition~\ref{prop:concHatF}
and $R$ is the radius of the Hölder ball. As $\epsilon\to0$ we see
that 
\[
2^{J_{0}}\simeq\big(\epsilon(\log\epsilon^{-1})\big)^{-2/(2s+2t+d)},
\]
which coincides with the choice of $j$ in the proof of Theorem~\ref{thm:mildly}.
The rest of the proof is divided into three steps.

\emph{Step 1:} We will proof that $\hat{J}\le J_{o}$ with probability
approaching one. We have for sufficiently small $\epsilon$
\begin{align*}
\P_{f_{0},\theta_{0}}(\hat{J}>J_{o}) 
 & =\P_{f_{0},\theta_{0}}\big(\exists i>j\ge J_{o}:\|\hat{f}_{i}-\hat{f}_{j}\|>\Delta\epsilon(\log\epsilon^{-1})^{2}2^{i(t+d/2)}\big)\\
 & \le\sum_{i>j\ge J_{o}}\P_{f_{0},\theta_{0}}\big(\|\hat{f}_{i}-\hat{f}_{j}\|>\Delta\epsilon(\log\epsilon^{-1})^{2}2^{i(t+d/2)}\big)\\
 & \le\sum_{i>j\ge J_{o}}\P_{f_{0},\theta_{0}}\big(\|\hat{f}_{i}-f_{0}\|+\|\hat{f}_{j}-f_{0}\|>\Delta\epsilon(\log\epsilon^{-1})^{2}2^{i(t+d/2)}\big)\\
 & \le2\mathcal{J}_{\epsilon}\sum_{j\ge J_{o}}\P_{f_{0},\theta_{0}}\big(\|\hat{f}_{j}-f_{0}\|>2CR\epsilon(\log\epsilon^{-1})2^{j(t+d/2)}\big).
\end{align*}
By definition of $J_{o}$ we have for every $j\ge J_{o}$ and $f_{0}\in H^{s}(R)$
that $\|f_{0}-P_{j}f_{0}\|\le R2^{-js}\le CR\log(1/\epsilon)\epsilon2^{j(t+d/2)}$.
Hence, for $\epsilon$ sufficiently small we obtain
\[
\P_{f_{0},\theta_{0}}(\hat{J}>J_{o})\le2\mathcal{J}_{\epsilon}\sum_{j\ge J_{0}}\P_{f_{0},\theta_{0}}\big(\|\hat{f}_{j}-f_{0}\|>C\|f_{0}\|\epsilon(\log\epsilon^{-1})2^{j(t+d/2)}+\|f_{0}-P_{j}f_{0}\|\big).
\]
For any $j\le\mathcal{J}_{\epsilon}$ we then have $\epsilon2^{j(t+d/2)}\to0$
and the concentration inequality from Proposition~\ref{prop:concHatF}
can be applied to $\hat{f}_{j}$ for any $\kappa\in(C^{-1}2^{jd/2}\epsilon,C2^{-jt}\epsilon^{-1})$
for a certain constant $C>0$. We can choose $\kappa=2^{jd/2}\epsilon(\log\epsilon^{-1})$
to obtain
\[
\P_{f_{0},\theta_{0}}(\hat{J}>J_{o})\le6\mathcal{J}_{\epsilon}^{2}e^{-2^{J_{o}d}(\log\epsilon)^{2}}\le6\mathcal{J}_{\epsilon}^{2}\epsilon\to0.
\]

\emph{Step 2:} In order to prove the adaptive contraction rate, we
replace the test $\Psi_{n}$ from the proof of Theorem~\ref{thm:contraction}
by
\[
\tilde{\Psi}:=\1_{\{\|\hat{f}_{\hat{J}}-f_{0}\|\ge2\epsilon(\log\epsilon^{-1})^{2}2^{J_{o}(t+d/2)}\}}
\]
requiring to verify (\ref{eq:test}) for $\tilde{\Psi}$ and 
\begin{equation}
\kappa=(\epsilon\log(1/\epsilon))^{2(s+t)/(2s+2t+d)},\quad\xi\simeq(\log\epsilon^{-1})\big(\epsilon(\log\epsilon^{-1})\big)^{2s/(2s+2t+d)}.\label{eq:kappaXiAdaptive}
\end{equation}
Note that $\epsilon(\log\epsilon^{-1})2^{J_{o}(t+d/2)}\simeq\big(\epsilon(\log\epsilon^{-1})\big)^{2s/(2s+2t+d)}$
by the choice of the oracle $J_{o}$. Thanks to Step 1 we have
\begin{align*}
\E_{f_{0},\theta_{0}}[\tilde{\Psi}] & =\P_{f_{0},\theta_{0}}\big(\|\hat{f}_{\hat{J}}-f_{0}\|\ge2\epsilon(\log\epsilon^{-1})^{2}2^{J_{o}(t+d/2)}\big)\\
 & \le\P_{f_{0},\theta_{0}}\big(\|\hat{f}_{\hat{J}}-f_{0}\|\ge2\epsilon(\log\epsilon^{-1})^{2}2^{J_{o}(t+d/2)},\hat{J}\le J_{o}\big)+6\mathcal{J}_{\epsilon}^{2}\epsilon\\
 & \le\P_{f_{0},\theta_{0}}\big(\|\hat{f}_{J_{o}}-f_{0}\|\ge2\epsilon(\log\epsilon^{-1})^{2}2^{J_{o}(t+d/2)}-\|\hat{f}_{\hat{J}}-\hat{f}_{J_{o}}\|,\hat{J}\le J_{o}\big)+6\mathcal{J}_{\epsilon}^{2}\epsilon.
\end{align*}
By construction of $\hat{J}$ we have $\|\hat{f}_{\hat{J}}-\hat{f}_{J_{o}}\|\le\epsilon(\log\epsilon^{-1})^{2}2^{J_{o}(t+d/2)}$
on the event $\{\hat{J}\le J_{o}\}$. Therefore,
\begin{align*}
\E_{f_{0},\theta_{0}}[\tilde{\Psi}]\le & \P_{f_{0},\theta_{0}}\big(\|\hat{f}_{J_{o}}-f_{0}\|\ge\epsilon(\log\epsilon^{-1})^{2}2^{J_{o}(t+d/2)}\big)+6\mathcal{J}_{\epsilon}^{2}\epsilon\\
\le & \P_{f_{0},\theta_{0}}\big(\|\hat{f}_{J_{o}}-f_{0}\|\ge2CR\epsilon(\log\epsilon^{-1})2^{J_{o}(t+d/2)}\big)+6\mathcal{J}_{\epsilon}^{2}\epsilon\\
\le&3\epsilon+6\mathcal{J}_{\epsilon}^{2}\epsilon\to0
\end{align*}
where the last bound follows from Proposition~\ref{prop:concHatF}
exactly as in Step 1. For any $f\in\mathcal{F}_{n}$ with $\|f-f_{0}\|\ge C_1\epsilon(\log\epsilon^{-1})^{2}2^{J_{0}(t+d/2)}$ for an sufficiently large constant $C_1$
and $\theta\in\Theta$ we obtain on the alternative with an argument
as in (\ref{eq:alternative}) 
\begin{align*}
\E_{f,\theta}[1-\tilde{\Psi}] & =\P_{f,\theta}\big(\|\hat{f}_{\hat{J}}-f_{0}\|\le2\epsilon(\log\epsilon^{-1})^{2}2^{J_{o}(t+d/2)}\big)\\
 & \le\P_{f,\theta}\big(\|\hat{f}_{\hat{J}}-f\|\ge\|f-f_{0}\|-2\epsilon(\log\epsilon^{-1})^{2}2^{J_{o}(t+d/2)}\big)\\
 & \le\P_{f,\theta}\big(\|\hat{f}_{\hat{J}}-f\|\ge C_2(1+\|f\|)\epsilon(\log\epsilon^{-1})^{2}2^{J_{o}(t+d/2)}\big).\\
 & \le3(1+2\mathcal{J}_{\epsilon}^{2})e^{-c2^{J_{o}d}(\log\epsilon)^{2}}
\end{align*}
for some $C_2,c>0$. Since $\mathcal{J}_{\epsilon}\simeq\log\epsilon^{-1}$
and 
\[
(\log\epsilon^{-1})^{2}2^{J_{o}d}\simeq(\log\epsilon^{-1})^{2}(\epsilon\log(1/\epsilon))^{-2d/(2s+2t+d)}\simeq\kappa^{2}\epsilon^{-2},
\]
we indeed have $\E_{f,\theta}[1-\tilde{\Psi}]\le e^{-C'\kappa^{2}\epsilon^{-2}}$
for some constant $C'\ge4$. 

\emph{Step 3:} With the previous preparations we can now prove the
adaptive contraction result. Given $\tilde{\Psi}_{n}$, we have for
any $r>0$ and $\xi$ from (\ref{eq:kappaXiAdaptive})
\begin{align*}
 & \P_{f_{0},\theta_{0}}\Big(\Pi_{n}\big(f\in\mathcal{F}:\|f-f_{0}\|>M\xi|Y,T\big)>r\Big)\\
 & \quad\le\P_{f_{0},\theta_{0}}\Big(\Pi_{n}\big(f\in\mathcal{F}\cap V_{\hat{J}}:\|f-f_{0}\|>M\xi|Y,T\big)(1-\tilde{\Psi})>r,\hat{J}\le J_{o}\Big)+6\mathcal{J}_{\epsilon}^{2}\epsilon\\
 & \quad\le\sum_{j\le J_{0}}\P_{f_{0},\theta_{0}}\Big(\Pi_{n}\big(f\in\mathcal{F}\cap V_{j}:\|f-f_{0}\|>M\xi|Y,T\big)(1-\tilde{\Psi})>r,\hat{J}=j\Big)+6\mathcal{J}_{\epsilon}^{2}\epsilon.
\end{align*}
We can now handle each term in the sum exactly as in the proof of
Theorem~\ref{thm:contraction}. It suffices to note that: First,
$\tilde{\Psi}$ depends only on the $\hat{J}=j$ projection of $Y$
and $j+m$ projection of $\theta$, respectively. Second, if the small
ball probability condition (\ref{eq:smallBall}) is satisfied for
$J_{o}$, as verified in the proof of Theorem~\ref{thm:mildly},
than by monotonicity it is also satisfied for all $j\le J_{0}$. We
thus conclude from (\ref{eq:boundRate})
\[
\P_{f_{0},\theta_{0}}\Big(\Pi_{n}\big(f\in\mathcal{F}:\|f-f_{0}\|>M\xi|Y,T\big)>r\Big)\le\frac{J_{o}}{r}e^{-2\kappa^{2}\epsilon^{-2}}+J_{o}\frac{\epsilon^{2}}{\kappa^2}+6\mathcal{J}_{\epsilon}^{2}\epsilon\to0.\tag*{{\qed}}
\]

\section*{Acknowledgement}

This work is the result of three conferences
which I have attended in 2017. The first two on Bayesian inverse problems
in Cambridge and in Leiden lead to my interest for this problem. On
the third conference in Luminy on the honour of Oleg Lepski's and
Alexandre B. Tsybakov's 60th birthday, I realised that Lepski's method
can be used to construct an empirical Bayes procedure. I want to thank
the organisers of these three conferences. The helpful comments by two anonymous referees are grate-
fully acknowledged.

\bibliographystyle{apalike}
\bibliography{lib}

\end{document}